\renewcommand{\phi}{\varphi}
\newcommand{\cc}{\mathcal{C}}
\newcommand{\e}{\varepsilon}
\newcommand{\op}{\mathop{\rm Op}\nolimits}
\newcommand{\la}{\lambda}
\newtheorem{definition}{Definition}
\newtheorem{theorem}[equation]{Theorem} 
\newtheorem{lemma}[equation]{Lemma}    
\newtheorem{corollary}[equation]{Corollary}
\theoremstyle{remark}
\numberwithin{equation}{section}
\newcommand{\ignore}[1]{{}}
\newcommand{\GG}{\mathbb{G}}
\newcommand{\HH}{\mathbb{H}}
\newcommand{\RR}{\mathbb{R}}
\renewcommand{\ss}{\mathcal{S}}
\newcommand{\8}{\infty}
\renewcommand{\d}{\partial}
\newcommand{\Hn}{{\HH^n}}
\newcommand{\Rn}{{\RR^n}}
\newcommand{\supp}{\mathrm{supp\,}}
\newcommand{\sgn}{\mathrm{sgn\,}}
\title[Flag kernels]
 {Invertibility in the Flag Kernels Algebra on the Heisenberg Group} 
\author{Grzegorz K\k{e}pa}
\address{Institute of Mathematics, University of Wroc{\l}aw,
pl. Grunwaldzki 2/4, 50-384 Wroc{\l}aw, Poland, email: grzegorz.kepa.wroclaw@gmail.com}
\begin{document}

\begin{abstract}
Flag kernels are tempered distributions which generalize these of Calderon-Zygmund type. For any homogeneous group $\mathbb{G}$ the class of operators which acts on $L^{2}(\mathbb{G})$ by convolution with a flag kernel is closed under composition. In the case of the Heisenberg group we prove the inverse-closed property for this algebra. It means that if an operator from this algebra is invertible on $L^{2}(\GG)$, then its inversion remains in the class.
\end{abstract}

\maketitle
\tableofcontents
\setcounter{tocdepth}{1} 
 

\section{Introduction}

A subalgebra $\mathcal{A}$ of the algebra $\mathcal{B}(\mathcal{H})$ of all bounded operators on a Hilbert space $\mathcal{H}$ is said to be \textit{inverse-closed} if every $a\in\mathcal{A}$ which is invertible in $\mathcal{B}(\mathcal{H})$ is also invertible in $\mathcal{A}$. The question whether an algebra of convolution operators on a Lie group, or simply $\Rn$ is \textit{inverse-closed} is not new. In 1953, Calder{\'o}n and Zygmund \cite{cz2} showed that the class of convolution operators on $L^{2}(\Rn)$ whose kernels are homogeneous of degree $-n$ and are locally in $L^{q}(\Rn)$ away from the origin, has the property. Here $1<q<\infty$.  Much later the result was generalized by Christ \cite{christ} who proved that similar algebras on a homogeneous group are \textit{inverse-closed}. A homogeneous group $\GG$ is a nilpotent Lie group with dilations, a very natural generalization of the homogeneous structure on $\Rn$.

Another direction has been taken by Christ and Geller \cite{christ-geller} who dealt with the algebra of operators with kernels which are homogeneous of degree $-n$ and smooth away from the identity on a homogeneous group with gradation. This algebra is \textit{inverse-closed} too. A step further has been made by G{\l}owacki \cite{87} who showed that this is so for any homogeneous group.

The kernels which are smooth away from the identity allow an interesting generalization. One can relax the homogeneity condition demanding only that the kernel satisfies the estimates
$$|\d_{x}^{\alpha}K(x)|\lesssim |x|^{-Q-|\alpha|},$$
where $Q$ is the homogeneous dimension of the group. The cancellation condition takes the form
$$|\langle K,\phi\rangle|=|\int_{\GG}\phi(x)K(x)dx|\lesssim \Vert\phi\Vert,$$
for $\phi\in\ss (\GG)$, where $\Vert\cdot\Vert$ is a fixed seminorm in the Schwartz space $\ss (\GG)$. Such kernels $K$ are often called the \textit{Calder{\'o}n-Zygmund} kernels and the corresponding operators $\op (K)$ the \textit{Calder{\'o}n-Zygmund} operators. The class is closed under the composition of operators (Cor{\'e}-Geller \cite{core}) so they form an algebra. It has been proved recently (G{\l}owacki \cite{cz3}) that this algebra is \textit{inverse-closed} as well.

Let us specify the notion to the Heisenberg group which is the group under study in this paper. For the sake of simplicity, let us consider here only the one-dimensional case.
The underlying manifold of $\HH$ is $\RR^{3}$ which we write down as
$$\HH=\HH_{1}\oplus\HH_{2}=\RR^{2}\oplus\RR.$$
In these coordinates the group law is
$$(w,t)\circ (v,s)=(w+v,t+s+w_{1}v_{2}),$$
where $w=(w_{1},w_{2})$, $v=(v_{1},v_{2})\in\RR^{2}$, $t,s\in\RR$. There are many choices of compatible dilations, but the most natural is
\[
\delta_{r}(w,t)=(rw,r^{2}t),\qquad r>0.
\]   
In this setting the size condition for a \textit{Calder{\'o}n-Zygmund} kernel reads
$$|\partial_{w}^{\alpha}\d_{t}^{\beta}K(w,t)|\lesssim (|w|+|t|)^{-4-\alpha-2\beta}.$$
The cancellation condition doesn't get any simpler, so we do not repeat it here.

We are going to compare these conditions with the estimates that define \textit{flag kernels} which are the main object of study in this paper. Flag kernels were introduced by M{\"u}ller-Ricci-Stein \cite{muller} and Nagel-Ricci-Stein \cite{nagel} in their study of Marcinkiewicz multipliers (the first paper) and CR manifolds (the other one). These kernels are much more singular than the \textit{Calder{\'o}n-Zygmund} kernels. Accordingly, the definition is more complex. We consider a tempered distribution $K$ on $\HH$ which is smooth for $w\neq 0$ and satisfies the estimates
$$|\d_{w}^{\alpha}\d_{t}^{\beta}K(w,t)|\lesssim |w|^{-2-\alpha}(|w|+|t|)^{-2-2\beta},\qquad w\neq 0,$$
as well as the following three cancellation conditions:

1) For every $\phi\in\ss (\HH_{1})$, the distribution
$$f\mapsto\int_{\HH}\phi(w)f(t)dwdt$$
is a \textit{Calder{\'o}n-Zygmund} kernel on $\HH_{2}$,

2) For every $\phi\in\ss (\HH_{2})$, the distribution
$$f\mapsto\int_{\HH}f(w)\phi(t)dwdt$$
is a \textit{Calder{\'o}n-Zygmund} kernel on $\HH_{1}$,

3) For every $\phi\in\ss (\HH)$,
$$|\int_{\HH}\phi(w,t)dwdt|\lesssim 1.$$
Finally, for given $\alpha,\beta$, the estimates are uniform with respect to $\phi$ if $\phi$ stays in a bounded set in the respective Schwartz space.

The operators with flag kernels share some properties with the \textit{Calder{\'o}n-Zygmund} operators. They are bounded on $L^{p}(\GG)$-spaces and form an algebra (see M{\"u}ller-Ricci-Stein \cite{muller}, Nagel-Ricci-Stein \cite{nagel}, Nagel-Ricci-Stein-Wainger \cite{nagel-ricci}, G{\l}owacki \cite{colloquium2010}, G{\l}owacki \cite{lp}). We are, however, interested in the inversion problem for this class. Before going any further, let us pause for a moment and consider the simplest case, namely that of an Abelian group $\Rn$. Then, the Fourier transform $\widehat{K}$ is a function on $\Rn$ which is smooth away from the origin and satisfies the estimates
\begin{align*}
|\d^{\alpha}_{\xi}\widehat{K}(\xi)|\lesssim |\xi|^{-|\alpha|},\qquad\xi\neq 0.
\end{align*}
These estimates are, equivalent to the ones defining the \textit{Calder{\'o}n-Zygmund} kernel. If the operator $\op (K)$ is invertible, then
$$|\widehat{K}(\xi)|\geqslant c>0,\ \ \ 0\neq\xi\in\Rn,$$
and it is directly checked that $\widehat{L}=1/\widehat{K}$ satisfies analogous estimates, so that $L$ is a flag kernel such that $L\star K=K\star L=\delta_{0}$.

A similar idea works for the Heisenberg group $\HH$. Let $\pi^{\la}$ denote the Schr{\"o}dinger representation of $\HH$ with the Planck constant $\la\neq 0$, If $K$ is a flag kernel on $\HH$ such that the operator $\op (K)$ is invertible, then, for every $\la\neq 0$, the operator $\pi_{K}^{\la}$ is invertible and can be regarded as a pseudodifferential operator in a suitable class. By the Beals theorem, the inverse belongs to the same class. Now, the estimates are uniform in $\la$, so one can recover the kernel of the inverse operator from the kernels of $(\pi_{K}^{\la})^{-1}$ and show that it is a flag kernel. Thus the algebra of the operators with flag kernels on the Heisenberg group turns out to be \textit{inverse-closed}. We believe that similar method could be used in the case of a general 2-step nilpotent Lie group.

There is a technicality in the proof we want to comment on. There exists no universal definition  of the extension of the unitary representation to a space of distributions. One has to rely on specific properties of the distribution space in question. Everything works fine for distributions with compact support. A \textit{Calder{\'o}n-Zygmund} kernel can be split into a compactly supported part and a part which is square integrable, so there is no problem with the definition of $\pi_{K}^{\la}$. No such splitting is available for flag kernels. Instead we modify the domain of the distribution. Originally, a distribution is a functional on the Schwartz space. We introduce two other spaces on which flag kernels can be regarded as continuous functionals. The cancellation conditions are important here. We also take adventage of a functional calculus of G{\l}owacki \cite{arkiv2007}. Once $\pi_{K}^{\la}$ is defined for our flag kernel, we can follow the path outlined above.
  

\section{Preliminaries}

The main structure of this work is the Heisenberg group. As a set it is 
\[\HH^{n}=\Rn\times\Rn\times\RR.
\]
Elements of the group will usually be denoted by
\[
\HH^{n}\ni h=(x,y,t)=(v,t).
\]
The group multiplication is 
\[
(x,y,t)\cdot(x',y',t')=(x+x',y+y',t+t'+xy').
\]
We define a homogeneous norm on $\Hn$ as
\[
|h|=\Vert v\Vert+|t|^{\frac{1}{2}}=\sum_{i=1}^{2n}|v_{i}|+|t|^{\frac{1}{2}}
\]
with the corresponding family of dilations
\[
\delta_{j}(h)=\delta_{j}(v,t)=(jv,j^{2}t),\qquad j>0,
\]
in the sense that $|\delta_{j}(h)|=j|h|$. 
The set $\{\delta_{j}\}_{j>0}$ actually forms a group of automorphisms. The homogeneous dimension is the number $Q=2n+2$. We will use the designations
\[
\d^{\gamma}_{h}=\d_{v}^{\alpha}\d_{t}^{\beta}=\d_{v_{1}}^{\alpha_{1}}\d_{v_{2}}^{\alpha_{2}}...\d_{v_{2n}}^{\alpha_{2n}}\d_{t}^{\beta}
\]
and
\[
|\gamma|=|\alpha|+2\beta=\sum_{i=1}^{2n}\alpha_{i}+2\beta,
\]
where $\alpha=(\alpha_{1},\alpha_{2},\dots,\alpha_{2n}),\ \alpha_{k},\beta\in\mathbb{N}$.
One of the main tools is the abelian Fourier transform defined by the formula
$$\widehat{f}(\zeta):=\int_{\Hn}f(h)e^{-2\pi ih\zeta}dh,$$
where
\[
\HH_{n}\ni \zeta=(\xi,\eta,\lambda)=(w,\lambda),
\hspace{2cm}
\HH_{n}=\RR_{n}\times\RR_{n}\times\RR.
\]
It can be first defined for Schwartz functions
\[
\mathcal{S}(\HH^{n})=\{f\in\mathcal{C}^{\infty}(\HH^{n}):\forall N\in\mathbb{N}\sup_{h\in\Hn}\max_{|\gamma|\leqslant N}|\partial_{h}^{\gamma}f(h)|(1+|h|)^{N}<\infty\}
\]
and then lifted to the Lebesque space of the square-integrable functions
\[
L^{2}(\Hn)=\{f:\int_{\Hn}|f(h)|^{2}dh<\infty\}
\]
or to the space of tempered distributions $\ss'(\Hn)$ wchich is the space of all continuous linear functionals on $\ss(\Hn)$ in the sense of the usual seminorm topology.
For $S\in\ss '(\Hn),\ f\in\ss (\Hn)$ one can put
$$\langle\widehat{S},f\rangle:=\langle S,\widehat{f}\rangle.$$
Let also
\[
f^{\star}(x)=\overline{f(x^{-1})},\quad\langle S^{\star},f\rangle:=\langle S,f^{\star}\rangle,\quad S\in\ss'(\Hn),\ f\in\ss (\Hn).
\]
By $\delta_{0}$ we will denote the Dirac distribution.


\section{Flag kernels and their convolution operators}

Automorphisms $\{\delta_{j}\}_{j>0}$ decompose our group $\Hn$ into their eigenspaces
$$\GG_{1}\oplus\GG_{2}\ni (v,t).$$
Theorem 2.3.9 of Nagel-Ricci-Stein \cite{nagel} says that there is a one-to-one correspodence between flag kernels and their multipliers. It allows us to define flag kernels as follows.
\begin{definition}
Let $\Hn$ be the Heisenberg group and
$$\mathbb{H}_{n}=\GG_{1}^{\star}\oplus\GG_{2}^{\star}\ni (w,\la)$$ the dual vector space to $\Hn$. We say that a tempered distribution $K$ is a flag kernel iff its Fourier transform $\widehat{K}$ agrees with a smooth function outside of hyperspace
$\{(w,\la):\la=0\}$ and satisfies the estimates
\begin{center}
$|\partial_{w}^{\alpha}\partial_{\lambda}^{\beta}\widehat{K}(w,\lambda)|\leqslant c_{\alpha,\beta}(\Vert w\Vert+|\lambda|^{1/2})^{-|\alpha|}|\lambda|^{-\beta}$,\ \ \ all $\alpha,\beta$.
\end{center}
\end{definition}
Observe that in particular $\widehat{K}$ belongs to $L^{\infty}(\HH_{n})$.
For $f\in\mathcal{S}(\Hn)$, $K\in\mathcal{S}'(\Hn)$ we define their convolution as $$K\star f(h):=\langle K,\, _{h^{-1}}\!\widetilde{f}\rangle=\int_{\Hn}K(h')f(h'^{-1}h)dh',$$
where $\widetilde{f}(h)=f(h^{-1})$.
Our point of departure are the following two theorems:
\begin{theorem}
Let $K$ be a flag kernel on the Heisenberg group. Then
\[
\Vert K\star f\Vert_{2}\lesssim \Vert f\Vert_{2},\qquad f\in\ss(\Hn).
\]
Hence we have an $L^{2}$-bounded operator $\op(K)f:=K\star f$.
\end{theorem}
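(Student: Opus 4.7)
My plan is to reduce the $L^{2}$ bound to a uniform estimate for a $\lambda$-parametrised family of operators on $\RR^{2n}$, via the partial Fourier transform in the central variable. Taking the Fourier transform of $K$ in $t$ yields a family $\{K^{\lambda}\}_{\lambda\neq 0}$ of distributions on $\GG_{1}\simeq\RR^{2n}$; Heisenberg convolution $K\star f$ becomes the $\lambda$-twisted convolution $K^{\lambda}\star_{\lambda}f^{\lambda}$, and Plancherel in the $t$-variable gives
\[
\|K\star f\|_{L^{2}(\HH^{n})}^{2}=\int_{\RR}\|K^{\lambda}\star_{\lambda}f^{\lambda}\|_{L^{2}(\RR^{2n})}^{2}\,d\lambda.
\]
It therefore suffices to show that twisted convolution with $K^{\lambda}$ is $L^{2}$-bounded on $\RR^{2n}$ uniformly in $\lambda\neq 0$.

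The next step is to identify $\lambda$-twisted convolution with $K^{\lambda}$ as the Weyl quantisation on $\RR^{n}$ (at Planck parameter $|\lambda|^{-1}$) of the symbol $w\mapsto\widehat{K}(w,\lambda)$ on the phase space $\GG_{1}^{\star}\simeq\RR^{2n}$. The flag-kernel estimates
\[
|\d_{w}^{\alpha}\widehat{K}(w,\lambda)|\lesssim(\|w\|+|\lambda|^{1/2})^{-|\alpha|}
\]
are not uniform in $\lambda$ as they stand, but the substitution $w=|\lambda|^{1/2}u$ rescales them to
\[
|\d_{u}^{\alpha}\widehat{K}(|\lambda|^{1/2}u,\lambda)|\lesssim(1+\|u\|)^{-|\alpha|},
\]
placing the rescaled symbol in $S^{0}_{0,0}(\RR^{2n})$ with seminorms independent of $\lambda$. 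The Calder\'on--Vaillancourt theorem then yields a uniform $L^{2}$-bound for its Weyl quantisation, and undoing the rescaling (a unitary dilation on $L^{2}$) gives the required uniform bound for twisted convolution with $K^{\lambda}$. Combined with the Plancherel identity above, this completes the argument.

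The main technical obstacle is that $K$ is only a tempered distribution and $\widehat{K}$ is merely smooth off the hyperplane $\{\lambda=0\}$, so each of the identifications above (partial Fourier transform, twisted convolution, Weyl quantisation) has first to be established for smooth truncations of $\widehat{K}$ supported away from $\{\lambda=0\}$; the uniform Calder\'on--Vaillancourt bound is precisely what lets us pass to the limit by testing against Schwartz functions. Observe that the weaker fact $\widehat{K}\in L^{\infty}(\HH_{n})$, noted after the definition, does \emph{not} by itself give $L^{2}$-boundedness, because Heisenberg convolution is non-commutative and so the abelian Fourier transform does not diagonalise it.
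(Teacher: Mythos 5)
The paper does not prove this theorem at all: immediately after stating it (together with the composition theorem), the author cites Nagel--Ricci--Stein (Theorems 2.6.B and 2.7.2), M\"uller--Ricci--Stein (Theorem 3.1), and the later generalizations of Nagel--Ricci--Stein--Wainger and G{\l}owacki. So your proposal cannot be matched against a proof in the text, and should be judged on its own merits.

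Your argument is correct in outline and is in fact a self-contained version of exactly the machinery the paper develops later, in Sections 6--8, for a different purpose (the inversion theorem). The chain you describe --- partial Fourier transform in $t$, Plancherel reducing $\Vert K\star f\Vert_{2}^{2}$ to $\int\Vert\pi^{\la}_{K\star f}\Vert_{HS}^{2}\,|\la|^{n}d\la$, identification of $\pi^{\la}_{K}$ as a pseudodifferential operator with symbol $\widetilde{\widehat{K}}(\sgn(\la)|\la|^{1/2}\xi,|\la|^{1/2}\eta,\la)$, the dilation $w=|\la|^{1/2}u$ putting these symbols uniformly in the class $Sym^{0}$, and Calder\'on--Vaillancourt --- is precisely what the paper records at the start of Section~8, where it notes the $\la$-uniform $Sym^{0}$ estimates and invokes Calder\'on--Vaillancourt to conclude that $\{\pi^{\la}_{K}\}_{\la}$ is uniformly bounded on $L^{2}(\Rn)$. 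One cosmetic imprecision: what comes out of the Schr\"odinger representation (as the paper sets it up in Sections 5--6) is the Kohn--Nirenberg quantisation, not the Weyl quantisation; this does not affect the $S^{0}_{0,0}$ uniformity or the applicability of Calder\'on--Vaillancourt, but it is worth getting right. A more substantive remark: the identifications you defer (defining $\pi^{\la}_{K}$ when $K$ is only a flag distribution, and the convolution identity $\pi^{\la}_{K\star f}=\pi^{\la}_{K}\pi^{\la}_{f}$ at the $L^{2}$ level) are exactly where the paper spends Sections~6--7, introducing $\ss_{0}(\Hn)$ and $\mathcal{B}_{0}(\Hn)$ and a limiting procedure over truncations; your sketch of handling this by smooth truncation away from $\{\la=0\}$ plus the uniform bound is the right idea, but it is the bulk of the technical work and would need to be carried out carefully. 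Your closing observation that $\widehat{K}\in L^{\infty}$ alone does not give $L^{2}$-boundedness is correct and a good sanity check. Compared with the cited Nagel--Ricci--Stein proof, which proceeds by decomposing the flag kernel dyadically and using Cotlar--Stein almost-orthogonality on a general two-step group, your representation-theoretic argument is more elementary and transparent for the Heisenberg group specifically, but it does not generalise to higher-step groups where no such clean family of irreducible representations is available; G{\l}owacki's symbolic-calculus proof, which the paper also cites, occupies a middle ground.
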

\begin{theorem}\label{splot}
Let $K,S$ be flag kernels on the Heisenberg group $\Hn$ and $$T:=\op(K)\op(S).$$ Then, there exists a flag kernel $L$ such that $T=\op(L)$.
\end{theorem}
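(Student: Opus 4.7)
My plan is to construct $L$ as a distributional convolution $K \star S$ and verify the flag-multiplier estimates for $\widehat L$. By the Nagel-Ricci-Stein multiplier theorem invoked before Definition 3.1, this shows $L$ is a flag kernel, and the identity $\op(L)=T$ is built into the construction.

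First I would give meaning to $L := K \star S$ as a tempered distribution. Since neither kernel is locally integrable across the origin, the convolution is interpreted operator-theoretically: the preceding theorem makes $T = \op(K)\op(S)$ a bounded translation-invariant operator on $L^2(\Hn)$, so the Schwartz-kernel theorem identifies it with convolution by a unique kernel $L \in \ss'(\Hn)$. Temperedness (rather than merely distributional character) follows from the continuity of flag-kernel operators on $\ss(\Hn)$, which is readable off the multiplier estimates and the $L^2$-bound applied to a Schwartz approximation of the identity.

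Next I would compute $\widehat L$ in terms of $\widehat K$ and $\widehat S$. The only non-abelian term in the group law $(v,t)(v',t') = (v+v', t+t'+xy')$ is the single bilinear $xy'$. Taking the partial Fourier transform in the central variable $t$ converts group convolution into a $\la$-twisted (Weyl-type) convolution in the $v$-variable for each fixed Planck parameter $\la$; completing the transform in $v$ then expresses $\widehat L(w, \la)$ as an oscillatory integral in $\widehat K(\cdot, \la)$ and $\widehat S(\cdot, \la)$ with a phase that is linear in $\la$ and bilinear in the dual variables.

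Finally I would verify
$$|\d_w^\alpha \d_\la^\beta \widehat L(w, \la)| \lesssim (\Vert w \Vert + |\la|^{1/2})^{-|\alpha|} |\la|^{-\beta}$$
by differentiating under the integral, applying the assumed estimates on $\widehat K, \widehat S$, and integrating by parts against the oscillatory phase. The main obstacle is uniformity in $\la$: each $\d_\la$ either lands on an amplitude (gaining $|\la|^{-1}$) or on the phase (producing $v$-polynomials), and these contributions must be converted into $|\la|^{1/2}$-gains by matching the Heisenberg scaling. The compatibility of the natural length scale $\Vert w\Vert + |\la|^{1/2}$ with both the anisotropic dilations and the twisting phase is precisely what makes the flag-multiplier class closed under the twisted product, and hence what yields the algebra property of flag-kernel convolution operators.
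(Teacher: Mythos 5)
The paper does not actually prove Theorem \ref{splot}. Immediately after stating it, the text attributes it (together with Theorem 3.1) to Nagel--Ricci--Stein \cite{nagel} (Theorems 2.6.B, 2.7.2), M\"{u}ller--Ricci--Stein \cite{muller} (Theorem 3.1), and the later generalizations \cite{nagel-ricci}, \cite{colloquium2010}; the theorem is taken as a black box. So there is no in-paper proof to compare against, and your proposal must be judged on its own.

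Your route --- identify $L$ with the convolution kernel of the bounded translation-invariant operator $T$, pass to the Fourier side, exhibit $\widehat{L}$ as a $\la$-parametrized twisted (Moyal-type) product of $\widehat{K}$ and $\widehat{S}$, and verify the multiplier estimates of Definition 3.1 --- is indeed the M\"{u}ller--Ricci--Stein approach for the Heisenberg case and is viable in principle. Two genuine issues, though. First, the phase is mis-stated. Carrying through the transform in $t$ and then in $v$ gives, for $n=1$,
$$\widehat{L}(\xi,\eta,\la)=\frac{1}{|\la|}\iint\widehat{K}(\xi',\eta,\la)\,\widehat{S}(\xi,\eta'',\la)\,e^{2\pi i(\xi'-\xi)(\eta''-\eta)/\la}\,d\xi'\,d\eta'' ,$$
so the phase is linear in $1/\la$, not in $\la$. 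This is not cosmetic: a phase $e^{ic\la(\cdot)}$ would produce no $\la$-decay when hit by $\d_\la$, and the closure of the multiplier class would fail. It is exactly the $1/\la$ structure --- equivalently, the $\la$-independence of the phase after the parabolic rescaling $w\mapsto|\la|^{1/2}w$, the same rescaling the paper later uses to land in the uniform class $Sym^{0}(\RR^{2n})$ --- that makes $|\d_\la^\beta\widehat{L}|\lesssim|\la|^{-\beta}$ attainable via integrations by parts combined with the $(\Vert w\Vert+|\la|^{1/2})^{-|\alpha|}$ gains. Second, and more substantively, the displayed integral is not absolutely convergent (the amplitudes $\widehat{K},\widehat{S}$ are merely $L^{\infty}$), so essentially all the content of the theorem lies in making the oscillatory integral rigorous and proving the estimates uniformly. ``Differentiate under the integral and integrate by parts'' is a correct slogan but not a proof; what the cited authors actually supply is, respectively, a dyadic bump-function decomposition (Nagel--Ricci--Stein), a direct twisted-convolution estimate (M\"{u}ller--Ricci--Stein), or a symbolic calculus with a slowly-varying metric (G\l{}owacki). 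Your sketch correctly locates the route and the obstacle but does not carry out the part that constitutes the theorem.
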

Thus the flag kernels give rise to convolution operators, bounded on $L^{2}(\Hn)$, which form a subalgebra of $\mathcal{B}(L^{2}(\Hn))$. For convenience we will write $L=K\star S$.

These are theorems of Nagel-Ricci-Stein \cite{nagel} who proved them for a class of homogeneous groups which includes all two-step homogeneous groups (Theorems 2.6.B and 2.7.2). Partial results can be found in an earlier paper of M{\"u}ller-Ricci-Stein \cite{muller} (Theorem 3.1). They were subsequently generalized for all homogeneous group independently and virtually simultanously by Nagel-Ricci-Stein-Wainger \cite{nagel-ricci} and G{\l}owacki \cite{colloquium2010}.
The aim of this paper is to prove the following theorem.
\begin{theorem}\label{nasze}
Let $\Hn$ be the Heisenberg group. Let $K$ be a flag kernel on $\Hn$. Suppose that the operator $\op(K)$ is invertible on $L^{2}(\Hn)$. Then there exists a flag kernel $L$, such that for all $f\in L^{2}(\Hn)$
$$\op(K)^{-1}f=L\star f=\op(L)f.$$
\end{theorem}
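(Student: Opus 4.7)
The plan is to reduce the problem to an invertibility question for a $\lambda$-parametrized family of pseudodifferential operators on $\RR^{n}$ via the Schr\"odinger representations $\pi^{\lambda}$ of $\HH^{n}$, and then to pass back from the inverted symbols to a flag kernel on $\HH^{n}$. The first task is to make sense of $\pi^{\lambda}_{K}$ for a flag kernel $K$. Since $K$ is neither compactly supported nor integrable, the textbook definition $\pi^{\lambda}_{K}=\int K(h)\pi^{\lambda}(h)\,dh$ does not apply directly. I would extend $\pi^{\lambda}$ to two auxiliary distribution spaces (larger than $\ss(\Hn)$) on which flag kernels act continuously, using the three cancellation conditions to control the contribution from infinity in each ``flag direction''. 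The functional calculus of G{\l}owacki \cite{arkiv2007} will be used to justify this extension and give $\pi^{\lambda}_{K}$ as a bounded operator on $L^{2}(\RR^{n})$ whose norm can be bounded uniformly in $\lambda\neq 0$.

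Next, I would show that $\pi^{\lambda}_{K}$ belongs to a suitable Beals-type class of pseudodifferential operators on $\RR^{n}$, with symbol estimates expressed in terms of the parameter $\lambda$. This step parallels the identification used by G{\l}owacki \cite{cz3} in the Calder\'on--Zygmund case and relies on rewriting the symbol of $\pi^{\lambda}_{K}$ in terms of the partial Fourier transform of $K$ in the $\GG_{1}$-variable, then reading off the flag estimates for $\widehat{K}(w,\lambda)$ as symbol bounds of the form
\[
|\d_{\xi}^{\alpha}\d_{x}^{\beta}\sigma^{\lambda}(x,\xi)|\lesssim \Phi^{\lambda}_{\alpha,\beta}(x,\xi),
\]
with weights $\Phi^{\lambda}_{\alpha,\beta}$ adapted to the anisotropy dictated by $|\lambda|^{1/2}$. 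The key point is that the constants depend on the seminorms of $\widehat{K}$ only, hence are independent of $\lambda$.

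The invertibility of $\op(K)$ on $L^{2}(\HH^{n})$ together with the Plancherel decomposition of $L^{2}(\HH^{n})$ into the direct integral of the Schr\"odinger representations forces $\pi^{\lambda}_{K}$ to be invertible on $L^{2}(\RR^{n})$ for almost every, and in fact every, $\lambda\neq 0$, with a uniform lower bound $\|(\pi^{\lambda}_{K})^{-1}\|\leq C$. Applying the Beals-type inversion theorem in the class identified above, the inverse $(\pi^{\lambda}_{K})^{-1}$ has a symbol satisfying the same estimates, and a careful bookkeeping of constants shows uniformity in $\lambda$. Writing this inverse as $\pi^{\lambda}_{L}$ for a distribution $L(\cdot,\lambda)$ on $\GG_{1}$ depending on $\lambda$, I would define $L$ by declaring its partial Fourier transform in the $\GG_{1}$-variable to be the $\lambda$-dependent symbol. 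Verifying that $L$ is a flag kernel then reduces to checking that $\widehat{L}=1/\widehat{K}$ in an appropriate sense and that $\widehat{L}(w,\lambda)$ satisfies the flag estimates of Definition~3.1; this follows from the uniform Beals symbol bounds for $(\pi^{\lambda}_{K})^{-1}$.

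The main obstacle I expect is the first step: setting up the extension of $\pi^{\lambda}$ to a distribution space tailored to flag kernels so that $\pi^{\lambda}_{K}$ is well defined, bounded on $L^{2}(\RR^{n})$ uniformly in $\lambda$, and amenable to a Beals-type calculus. The three cancellation conditions in the definition of flag kernels are strong enough to compensate for the lack of integrability, but only if one works with function spaces that respect the flag structure $\GG_{1}\oplus\GG_{2}$; constructing these spaces and transferring the operator composition property (Theorem~\ref{splot}) to the extended setting is the technical heart of the argument. Once this framework is in place, the remaining steps follow the pattern of \cite{cz3} with the Heisenberg-specific modifications described above.
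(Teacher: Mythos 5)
Your plan follows the paper's strategy closely: define $\pi_{K}^{\la}$ by extending the representation to flag-adapted distribution spaces (the paper's $\ss_{0}(\Hn)$ and $\mathcal{B}_{0}(\Hn)$), use Plancherel plus the invertibility of $\op(K)$ to get uniform invertibility of the fibers $\pi_{K}^{\la}$, invoke Beals' theorem, and reassemble the inverse symbols into a flag kernel. Two technical points are glossed over, and one of them, as you state it, would not go through.

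First, passing the lower bound $\Vert\op(K)f\Vert_{2}\geqslant c\Vert f\Vert_{2}$ through the Plancherel decomposition gives $\Vert\pi_{K}^{\la}g\Vert_{2}\geqslant c\Vert g\Vert_{2}$ for every $g$, i.e.\ injectivity with closed range of each $\pi_{K}^{\la}$, but for a general flag kernel this does not yet give surjectivity. The paper first reduces to the symmetric case $K=K^{\star}$, by applying the result to $K^{\star}\star K$ and $K\star K^{\star}$ (which are symmetric flag kernels by Theorem~\ref{splot}); then $\pi_{K}^{\la}$ is self-adjoint and the lower bound already implies invertibility. Your sketch does not mention this reduction and jumps directly to ``invertible for every $\la\neq 0$''.

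Second, and more seriously, the Beals theorem gives uniform bounds $|\d_{w}^{\alpha}b_{\la}(w)|\lesssim(1+\Vert w\Vert)^{-|\alpha|}$, which after the rescaling $\widehat{B}(w,\la)=b_{\la}(|\la|^{-1/2}w)$ yield only the $\beta=0$ part of the flag estimates, $|\d_{w}^{\alpha}\widehat{B}(w,\la)|\lesssim(\Vert w\Vert+|\la|^{1/2})^{-|\alpha|}$. They say nothing about smoothness of $b_{\la}$ in the parameter $\la$, nor about the required decay $|\d_{w}^{\alpha}\d_{\la}^{\beta}\widehat{B}(w,\la)|\lesssim(\Vert w\Vert+|\la|^{1/2})^{-|\alpha|}|\la|^{-\beta}$. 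The paper obtains these by a separate inductive argument: it first proves (via Arzel\`a--Ascoli and the relation $a_{\la}\#b_{\la}=1$) that $\la\mapsto b_{\la}$ is weakly smooth, then differentiates $a_{\la}\#b_{\la}=1$ to get $\d_{\la}b_{\la}=-b_{\la}\#\d_{\la}a_{\la}\#b_{\la}$ and its iterates, and finally shows by induction, using $|\d_{\la}^{M}a_{\la}|\lesssim|\la|^{-M}$ (a consequence of $K$ being a flag kernel) together with the symbolic calculus, that the families $\{|\la|^{M}\d_{\la}^{M}b_{\la}\}_{\la}$ are bounded in $Sym^{0}(\RR^{2n})$. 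This is the technical heart of passing from ``Beals inverse'' to ``flag kernel''; your phrase ``this follows from the uniform Beals symbol bounds'' collapses it to a triviality, and as written the step would fail.
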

Observe that $\op(K)$ is translation invariant.
Further the same holds for its inversion.
By general theory it follows that there exists a tempered distribution $L$ such that $\op(K)^{-1}f=L\star f$. Now it suffices to show that $L$ is a flag kernel. In the following considerations we can assume that the flag kernel $K$ is symmetric, i.e. $K=K^{\star}$. In fact if Theorem \ref{nasze} is true for such kernels, let us pick an arbitrary flag kernel $K$. Then, we can consider kernels $K^{\star}\star K$ and $K\star K^{\star}$ which are symmetric. By Theorem \ref{splot} they are both flag kernels. Therefore, by Theorem \ref{nasze}, there exist flag kernels $S,T$ such that
\[
S\star(K^{\star}\star K)=\delta_{0}
\hspace{1cm}
\&
\hspace{1cm}
(K\star K^{\star})\star T=\delta_{0}.
\]
Again, by Theorem \ref{splot} and associativity, it follows that there exist flag kernels $L_{1},L_{2}$ such that
\[
L_{1}\star K=\delta_{0}
\hspace{1cm}
\&
\hspace{1cm}
K\star L_{2}=\delta_{0}.
\]
The identity
$$L_{1}=L_{1}\star (K\star L_{2})=(L_{1}\star K)\star L_{2}=L_{2}$$
ends the proof of our theorem for an arbitrary flag kernel $K$.    
 

\section{Schr{\"o}dinger representation}

\begin{definition}
For $\lambda\neq 0$ and $h\in\Hn$ we define the family of operators $$\{\pi_{h}^{\lambda}:h\in\Hn,\lambda\neq 0\},$$ all acting on the same $L^{2}(\Rn)$ by the following formula
\[
\pi_{h}^{\la}f(s):=\left\{
\begin{array}{ll}
e^{2\pi i\la t}e^{2\pi i\sqrt{\la}ys}f(s+\sqrt{\la}x);\ \la>0,
\cr
e^{2\pi i\la t}e^{2\pi i\sqrt{|\la|}ys}f(s-\sqrt{|\la|}x);\ \la<0.
\end{array}
\right.
\]
\end{definition}
For a Hilbert space $\mathcal{H}$, denote by $\mathcal{U}(\mathcal{H})$, $\mathcal{B}(\mathcal{H})$ the spaces of all unitary and bounded operators, respectively. It is well-known (see, e.g. Folland \cite{folland}, sec. 1.3) that, for every $\la\neq 0$,
$$\Hn\ni h\longmapsto\pi_{h}^{\la}\in\mathcal{U}(L^{2}(\Rn))$$
is a unitary representation on the Hilbert space $L^{2}(\Rn)$.
$$L^{1}(\Hn)\ni f\longmapsto\pi_{f}^{\la}\in\mathcal{B}(L^{2}(\Rn))$$
is a representation of $\star$-algebra $L^{1}(\Hn)$ on the Hilbert space $L^{2}(\Rn)$. 
 
 
\section{Useful notation}

For $f,g\in\ss(\Rn)$ we define the function
\[
c_{f,g}(x,y):=\int_{\Rn}e^{2\pi iyu}f(u+x)g(u)du.
\]
In particular
\[
\widehat{c_{f,g}}(\xi,\eta)=\widehat{f}(\xi)g(\eta)e^{2\pi i\xi\eta}
.\]
Let also
\[
C_{f,g}^{\lambda}(x,y,t):=\langle\pi_{h}^{\la}f,g\rangle
\]
Let $\la>0$. One can calculate that
\begin{align*}
C_{f,g}^{\lambda}(x,y,t)=\int_{\Rn}\pi_{(x,y,t)}^{\la}f(u)g(u)du=e^{2\pi it\la}c_{f,g}(\sqrt{\la}x,\sqrt{\la}y).
\end{align*}
We also have
\begin{align*}
\widehat{c_{f,g}\circ\delta_{\sqrt{\la}}}(\xi,\eta)&=\int\int\int e^{-2\pi ix\xi}e^{-2\pi iy\eta}e^{2\pi i\sqrt{\la}yu}f(u+\sqrt{\la}x)g(u)dudxdy
\\
&=\int\int\int\la^{-n/2}e^{-2\pi ix\frac{\xi}{\sqrt{\la}}}e^{2\pi iu\frac{\xi}{\sqrt{\la}}}e^{-2\pi iy\eta}e^{2\pi i\sqrt{\la}yu}f(x)g(u)dudxdy\\
&=\int\int\la^{-n/2}\widehat{f}(\frac{\xi}{\sqrt{\la}})e^{2\pi iu(\frac{\xi}{\sqrt{\la}}+\sqrt{\la}y)}e^{-2\pi iy\eta}g(u)dudy\\
&=\int\la^{-n/2}\widehat{f}(\frac{\xi}{\sqrt{\la}})g^{\vee}(\frac{\xi}{\sqrt{\la}}+\sqrt{\la}y)e^{-2\pi iy\eta}dy\\
&=\int|\la|^{-n}\widehat{f}(\frac{\xi}{\sqrt{\la}})g^{\vee}(y)e^{-2\pi iy\frac{\eta}{\sqrt{\la}}}e^{2\pi i\frac{\xi}{\sqrt{\la}}\frac{\eta}{\sqrt{\la}}}dy\\
&=|\la|^{-n}\widehat{f}(\frac{\xi}{\sqrt{\la}})g(\frac{\eta}{\sqrt{\la}})e^{2\pi i\frac{\xi}{\sqrt{\la}}\frac{\eta}{\sqrt{\la}}}=|\la|^{-n}\widehat{c_{f,g}}(\frac{\xi}{\sqrt{\la}},\frac{\eta}{\sqrt{\la}}).
\end{align*}
Moreover
\[
\widehat{C_{f,g}^{\la}}(\xi,\eta,r)=\widehat{c_{f,g}\circ\delta_{\sqrt{\la}}}\otimes\widehat{e^{2\pi i(\cdot)\la}}(\xi,\eta,r)=|\la|^{-n}\widehat{c_{f,g}}\otimes\delta_{\la}(\frac{\xi}{\sqrt{\la}},\frac{\eta}{\sqrt{\la}},r),
\]
where $\delta_{\la}$ is a Dirac distribution supported at $\la$. For $\la<0$ the above formula should be slightly modified. For such $\la$ one can get analogously
\[
\widehat{C_{f,g}^{\la}}(\xi,\eta,r)=-|\la|^{-n}\widehat{c_{f,g}}\otimes\delta_{\la}(-\frac{\xi}{|\la|^{1/2}},\frac{\eta}{|\la|^{1/2}},r).
\]

Suppose that $a$ is a function on $\Rn\times\Rn$ which is bounded or square-integrable. Then, the weakly defined operator
\begin{align*}
\langle Af,g\rangle &=\int\int e^{2\pi i\xi\eta}a(\xi,\eta)\widehat{f}(\xi)g(\eta)d\eta d\xi 
\\
&=\int\int a(\xi,\eta)\widehat{c_{f,g}}(\xi,\eta)d\xi d\eta=\langle a,\widehat{c_{f,g}}\rangle
\end{align*} 
is a continuous mapping from $\ss (\Rn)$ to $\ss' (\Rn)$. It is often denoted by $A=a(x,D)$ and called a pseudodifferential operator with the KN (Kohn-Nirenberg) symbol $a$. 


\section{The class $\ss_{0}$ and the operator $\pi_{K}^{\la}$}

Let $g$ be a function on $\HH_{n}$ such that $g(w,\la)=\phi(\la)$, where $\phi\in\mathcal{C}_{c}^{\infty}(\mathbb{G}_{2}^{\star})$. Then
$g^{\vee}(u,t)=\delta_{0}\otimes\phi^{\vee}(u,t)$.
Observe that if for example $f\in\ss(\Hn)$, then
\begin{align*}
f\star g^{\vee}(h)&=\int f(hr^{-1})dg^{\vee} (r)=\int f(h-r)dg^{\vee} (r)
\\
&=\int f(r^{-1}h)dg^{\vee} (r)=g^{\vee}\star f(h),
\end{align*}
so $g^{\vee}$ is a central measure.
We will need a notion of the $\la$-support of a function $f$. By definition a real number $\la_{0}$ is not in the $\la$-$\supp(f)$ iff there exists $\varepsilon$ such that no point $(w,\la)$, where $\la\in (\la_{0}-\varepsilon,\la_{0}+\varepsilon)$, is in the support of $f$.
\begin{definition}\label{lambda}
We say that a Schwartz function $f$ is in $\ss_{0} (\Hn)$ iff
\[
(\exists\e>0)\forall\phi\in\cc_{c}^{\infty}((-\e,\e)\cup(-1/\e,-\infty)\cup(1/\e,\infty))\,\widehat{f}\phi=0,
\]
that is, iff $\la$-$\supp (f)$ is bounded and does not contain $0$.
\end{definition}
\begin{lemma}
Suppose $f\in\ss_{0} (\Hn)$ and $K$ is a flag kernel. Then $K\star f$ is in $\ss_{0} (\Hn)$.
\end{lemma}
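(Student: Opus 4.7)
The plan is to work on the Fourier side, where the convolution becomes the pointwise product $\widehat{K\star f} = \widehat{K}\,\widehat{f}$. This product is well defined pointwise almost everywhere since $\widehat{K}\in L^{\infty}(\HH_{n})$, as noted just after the definition of a flag kernel. By hypothesis there exist $0 < m < M$ with $\widehat{f}(w,\lambda)=0$ whenever $|\lambda|<m$ or $|\lambda|>M$, so the $\lambda$-support of $\widehat{K}\,\widehat{f}$ is contained in that of $\widehat{f}$ and is therefore automatically bounded and bounded away from $0$. Hence the $\ss_{0}$-localization condition passes to $K\star f$ for free, and the only real content of the lemma is to show that $K\star f$ is again Schwartz.

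To prove that $\widehat{K}\,\widehat{f}$ is Schwartz, the strategy is to exploit the fact that its support in $\lambda$ lies in $\{m\le|\lambda|\le M\}$, which is bounded away from the singular hyperplane $\{\lambda=0\}$ of $\widehat{K}$. On this region the flag-kernel estimates
\[
|\partial_{w}^{\alpha}\partial_{\lambda}^{\beta}\widehat{K}(w,\lambda)|\le c_{\alpha,\beta}(\|w\|+|\lambda|^{1/2})^{-|\alpha|}|\lambda|^{-\beta}
\]
give uniform bounds in $\lambda$ together with honest decay in $w$ for every derivative of $\widehat{K}$. Via the Leibniz rule, each derivative $\partial^{\gamma}(\widehat{K}\,\widehat{f})$ decomposes into a finite sum of products of a smooth function whose every derivative is bounded on $\supp\widehat{f}$ with a Schwartz function, which is itself Schwartz. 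Fourier inversion then yields $K\star f\in\ss(\HH^{n})$, completing the proof.

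I do not expect any genuine obstacle here: the crucial point is that the $\lambda$-cut-off built into $\ss_{0}$ keeps $\widehat{f}$ clear of the singular set of $\widehat{K}$, so none of the flag-kernel cancellation conditions (the subtle part of the $L^{2}$-theory alluded to in Theorem~3.2) needs to be invoked. The only bookkeeping to carry out carefully is the Leibniz expansion in the second paragraph, which is entirely routine once the $\lambda$-localization is in hand.
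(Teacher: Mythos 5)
There is a fatal error in the very first step: you assert that, on the Fourier side, the group convolution becomes the pointwise product, i.e.\ $\widehat{K\star f}=\widehat{K}\,\widehat{f}$. This is false. The Fourier transform used throughout the paper is the \emph{abelian} (Euclidean) Fourier transform on $\HH^n\cong\RR^{2n+1}$, whereas $K\star f$ is \emph{Heisenberg group} convolution, and the group is nonabelian. Under the abelian transform, group convolution becomes the \emph{twisted} product $\widehat{K}\#\widehat{f}:=(K\star f)^{\wedge}$, not the pointwise product. (You would only get pointwise multiplication for convolution against a central measure such as $\phi^\vee=\delta_0\otimes\phi^\vee$ with $\phi$ depending only on $t$ -- which is exactly the special case the paper exploits for the $\la$-support step, not for the Schwartz step.) Because of this, your entire second paragraph -- Leibniz on $\widehat{K}\widehat{f}$ plus Fourier inversion -- proves smoothness and decay of the wrong object; it says nothing about $K\star f$.

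Your conclusion about the $\la$-support happens to be correct (the $\la$-support of $\widehat{K\star f}$ is indeed contained in that of $\widehat{f}$, because taking the partial Fourier transform in $t$ alone turns group convolution into a $\la$-by-$\la$ twisted convolution in the $(x,y)$ variables), but your justification for it is wrong for the same reason. The real work in the lemma is precisely the part you dismissed as ``no genuine obstacle'': controlling the twisted product $\widehat{K}\#\widehat{f}$. The paper does this by first replacing $K$ by $K_1=K\star\psi^\vee$ (a central cutoff that removes the singularity at $\la=0$ so that $\widehat{K_1}$ becomes a genuine order-zero symbol, i.e.\ $K_1\in Sym^{0,0}$), and then invoking G\l{}owacki's symbolic calculus \cite{arkiv2007} for the $\#$-product associated to the metric $g$, which gives $Sym^{0,0}\# Sym^{N,M}\subseteq Sym^{N,M}$ and hence $K_1\star f\in\bigcap_{N,M}Sym^{N,M}\cong\ss(\HH^n)$. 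This symbolic-calculus input is not optional bookkeeping; it is the substance of the proof, and it has no analogue in your argument.
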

\begin{proof}
Let us define first $a\# b:=(a^{\vee}\star b^{\vee})^{\wedge}$. As $f$ is in $\ss_{0}(\Hn)$ take $\varepsilon,\phi$ which satisfy  the condition of the definition \ref{lambda}. We have
\begin{align*}
\widehat{K\star f}\phi=(K\star f\star\phi^{\vee})^{\wedge}=\widehat{K}\#\widehat{f}\phi=0,
\end{align*}
so the same $\varepsilon$ works also for $K\star f$.
It remains to explain why $K\star f$ is an element of $\ss(\Hn)$.
Let $\psi\in\cc_{c}^{\infty}(\RR\setminus\{0\})$ be equal to 1 on $\la$-$\supp$ of $f$. Observe that $\psi^{\vee}$ can be thought of as a central measure. Thus
\begin{align*}
K\star f=K\star f\star\psi^{\vee}=K\star\psi^{\vee}\star f=K_{1}\star f,
\end{align*}
where $\widehat{K_{1}}$ is smooth and
\begin{align*}
|\d_{w}^{\alpha}\d_{\la}^{\beta}\widehat{K_{1}}(w,\la)|\leqslant c_{\alpha,\beta}(1+\Vert w\Vert+|\la|^{1/2})^{-|\alpha|}(1+|\la|)^{-\beta}.
\end{align*}
Now if we write that $a\in Sym^{N,M}(\Hn)$ iff
\begin{align*}
|\d_{w}^{\alpha}\d_{\la}^{\beta}\widehat{a}(w,\la)|\leqslant c_{\alpha,\beta}(1+\Vert w\Vert+|\la|^{1/2})^{-|\alpha|-N}(1+|\la|)^{-\beta-M},
\end{align*}
then $K_{1}\in Sym^{0,0}(\Hn)$, $f\in Sym^{N,M}(\Hn)$ for all $N,M$ because it is a Schwartz function. By G{\l}owacki's symbolic calculus \cite{arkiv2007} (Theorem 6.4) governed by the metric
\[
g_{(w,\la)}(u,r)=\frac{\Vert u\Vert}{1+\Vert w\Vert+|\la|^{1/2}}+\frac{|r|}{(1+|\la|^{1/2})^{2}};\qquad (w,\la)\in\mathbb{H}_{n},\ (u,r)\in\mathbb{H}_{n}
\]
we have
\begin{align*}
K\star f&=K_{1}\star f\in Sym^{0,0}(\Hn)\star \bigcap_{N,M}Sym^{N,M}(\Hn)\\
&\subseteq\bigcap_{N,M}Sym^{0,0}(\Hn)\star Sym^{N,M}(\Hn)
\\
&\subseteq\bigcap_{N,M}Sym^{N,M}(\Hn)\cong \ss (\Hn).
\end{align*}
\end{proof}
\begin{lemma}
The class $S_{0} (\Hn)$ is dense in $L^{2}(\Hn).$
\end{lemma}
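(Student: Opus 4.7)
The strategy is to combine the standard density of $\ss(\Hn)$ in $L^{2}(\Hn)$ with a Fourier-side cut-off in the central variable $\la$. Since $\Hn$ is just $\RR^{2n+1}$ as a measure space, the abelian Fourier transform is a unitary on $L^{2}(\Hn)$, so approximating $f\in\ss(\Hn)$ in the $L^{2}$-norm is equivalent to approximating $\widehat{f}$ in $L^{2}(\HH_{n})$ by functions whose $\la$-support is compact and bounded away from zero, provided the approximants remain Schwartz.

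Concretely, I would pick a sequence $\chi_{k}\in\cc_{c}^{\infty}(\RR)$ with $0\leqslant\chi_{k}\leqslant 1$, with $\chi_{k}\equiv 1$ on $\{1/k\leqslant|\la|\leqslant k\}$, and with $\supp\chi_{k}\subset\{1/(2k)\leqslant|\la|\leqslant 2k\}$. Given $f\in\ss(\Hn)$, define $f_{k}$ by $\widehat{f_{k}}(w,\la):=\chi_{k}(\la)\widehat{f}(w,\la)$. Because $\chi_{k}$ is smooth and compactly supported, the product $\chi_{k}\widehat{f}$ is Schwartz on $\HH_{n}$, so $f_{k}\in\ss(\Hn)$. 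The $\la$-support of $\widehat{f_{k}}$ lies inside $\{1/(2k)\leqslant|\la|\leqslant 2k\}$, hence taking $\e=1/(2k)$ matches Definition \ref{lambda}, and therefore $f_{k}\in\ss_{0}(\Hn)$.

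To finish, Plancherel yields
\[
\Vert f-f_{k}\Vert_{L^{2}(\Hn)}^{2}=\int_{\HH_{n}}|1-\chi_{k}(\la)|^{2}|\widehat{f}(w,\la)|^{2}\,dw\,d\la.
\]
The integrand is dominated by $|\widehat{f}|^{2}\in L^{1}(\HH_{n})$ and tends pointwise to zero off the null set $\{\la=0\}$, so dominated convergence forces the right-hand side to $0$ as $k\to\infty$. Combined with the classical density of $\ss(\Hn)$ in $L^{2}(\Hn)$ (via a diagonal argument), this yields the lemma.

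I do not expect a serious obstacle: the only detail that must be checked with care is that the cut-off simultaneously preserves the Schwartz class (immediate, since $\chi_{k}\in\cc_{c}^{\infty}(\RR)$ acts only in the $\la$-variable) and produces a $\la$-support disjoint from the origin in the precise sense required by Definition \ref{lambda}; the $L^{2}$-convergence itself is just dominated convergence after Plancherel.
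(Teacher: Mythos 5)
Your proof is correct, but it follows a genuinely different route from the paper's. You argue \emph{constructively}: you take an arbitrary $f\in\ss(\Hn)$, insert a smooth Fourier-side cut-off $\chi_{k}(\la)$ in the central variable, check that $\widehat{f_{k}}=\chi_{k}\widehat{f}$ stays Schwartz and has $\la$-support bounded away from $0$ and from $\infty$ (so $f_{k}\in\ss_{0}(\Hn)$ as in Definition~\ref{lambda}), and then conclude $f_{k}\to f$ in $L^{2}$ by Plancherel and dominated convergence; a two-step approximation then handles a general $g\in L^{2}(\Hn)$. The paper instead argues by \emph{duality}: if $g\in L^{2}(\Hn)$ is orthogonal to every $f\in\ss_{0}(\Hn)$, Plancherel forces $\widehat{g}$ to be supported in the hyperplane $\{\la=0\}$, a Lebesgue-null set, so $g=0$ a.e.\ and the orthogonal complement of $\ss_{0}(\Hn)$ is trivial. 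The paper's argument is shorter, but implicitly relies on the same observation your construction makes explicit (that Fourier-side $\la$-truncations of Schwartz functions exhaust everything supported off $\{\la=0\}$). Your version has the advantage of producing the approximating sequence concretely, which can be reused; for instance the same cut-off construction underlies the later step where the paper multiplies a flag kernel by a central measure $\psi^{\vee}$. One small terminological nit: the final step is a two-step triangle-inequality approximation rather than a ``diagonal argument'' in the usual sense.
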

\begin{proof}
Let $g\in L^{2}(\Hn)$ be such that $\forall f\in S_{0}(\Hn)\ \langle g,f\rangle=0.$
Then $\langle\hat{g},\hat{f}\rangle=0$. Hence $\supp\hat{g}\subseteq\RR^{2n}\times\{0\}.$
But it implies that $g=0$ almost everywhere.
\end{proof}
\begin{lemma}
The G{\"a}rding space $$\mathcal{G}^{\la}:=\{\pi_{\phi}^{\la}f: \phi\in S_{0}(\Hn), f\in L^{2}(\Rn)\}$$
is dense in $L^{2}(\Rn).$
\end{lemma}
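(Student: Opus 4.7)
The plan is to show directly that every Schwartz function on $\Rn$ lies in $\mathcal{G}^{\la}$, which forces density since $\ss(\Rn)$ is dense in $L^{2}(\Rn)$. The key mechanism is that, by selecting $\phi\in\ss_{0}(\Hn)$ of a suitable product form, the operator $\pi_{\phi}^{\la}$ can be realised as an integral operator on $L^{2}(\Rn)$ with an arbitrary Schwartz kernel.

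Assume first $\la>0$ (the case $\la<0$ is analogous). I would start with the ansatz $\phi(x,y,t)=\psi(x,y)\rho(t)$, where $\psi\in\ss(\RR^{2n})$ is free and $\rho\in\ss(\RR)$ is fixed once and for all so that $\widehat{\rho}\in\cc_{c}^{\infty}(\RR)$ has support in a small compact interval around $-\la$ missing $0$ and satisfies $\widehat{\rho}(-\la)=1$. Since $\widehat{\phi}(w,\mu)=\widehat{\psi}(w)\widehat{\rho}(\mu)$, the $\la$-support of $\widehat{\phi}$ equals $\supp\widehat{\rho}$, so $\phi$ is automatically in $\ss_{0}(\Hn)$. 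A direct computation from the definition of $\pi_{h}^{\la}$ then yields, after the $t$-integral produces precisely the factor $\widehat{\rho}(-\la)=1$,
\[
\pi_{\phi}^{\la}f(s)=\int_{\RR^{2n}}\psi(x,y)\,e^{2\pi i\sqrt{\la}\,ys}\,f(s+\sqrt{\la}\,x)\,dx\,dy.
\]

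The next step is to rewrite this as an integral operator with a generic Schwartz kernel. Performing the substitution $u=\sqrt{\la}\,x$, $v=\sqrt{\la}\,y$, followed by a partial inverse Fourier transform in $v$, and finally $r=s+u$, one obtains
\[
\pi_{\phi}^{\la}f(s)=\int_{\Rn}K(s,r)\,f(r)\,dr,
\]
where $K(s,r)=D(r-s,s)$ and $D$ is the partial inverse Fourier transform, in the second variable, of $\la^{-n}\psi(\,\cdot\,/\sqrt{\la},\,\cdot\,/\sqrt{\la})$. Each of the intermediate maps $\psi\mapsto\tilde{\psi}\mapsto D\mapsto K$ is a bijection of $\ss(\RR^{2n})$, so every Schwartz kernel on $\Rn\times\Rn$ arises from some $\psi$. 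In particular, for any $a\in\ss(\Rn)$ and any fixed real $c\in\ss(\Rn)$ with $\|c\|_{2}=1$, I pick the $\phi\in\ss_{0}(\Hn)$ that produces the kernel $K(s,r)=a(s)c(r)$; then $\pi_{\phi}^{\la}c=a$, so $a\in\mathcal{G}^{\la}$. Taking the closure in $L^{2}(\Rn)$ concludes the proof.

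The main obstacle is of a bookkeeping nature: the restriction that $\phi$ lie in $\ss_{0}(\Hn)$, rather than merely in $\ss(\Hn)$, prevents one from quoting a general irreducibility statement for $\pi^{\la}$ off the shelf. The product ansatz circumvents this painlessly, because the $\la$-support of $\widehat{\phi}$ is controlled entirely through $\widehat{\rho}$, which can be localised anywhere in $\RR\setminus\{0\}$ independently of $\psi$. The case $\la<0$ goes through identically, with $\sqrt{|\la|}$ in place of $\sqrt{\la}$ and the sign change dictated by the second line of the definition of $\pi_{h}^{\la}$.
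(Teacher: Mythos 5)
Your proof is correct, but it takes a genuinely different route from the paper's. The paper argues by duality: it picks $g\in L^{2}(\Rn)$ orthogonal to $\mathcal{G}^{\la}$, specializes to product test functions $\phi(x,y,t)=\phi_{1}(x,y)\phi_{2}(t)$, deduces that the function $F(t)=\int\phi_{1}(x,y)\langle g,\pi_{h}^{\la}f\rangle\,dx\,dy$ has Fourier transform supported at the origin, combines this with boundedness of $F$ to conclude $F$ is constant, and hence that $\langle g,\pi_{(0,0,t)}^{\la}f\rangle$ is $t$-independent --- which forces $g=0$. Your argument instead constructs, for an arbitrary Schwartz function $a$ on $\Rn$, an explicit $\phi\in\ss_{0}(\Hn)$ in product form whose operator $\pi_{\phi}^{\la}$ has prescribed Schwartz integral kernel $a(s)c(r)$; applying it to $c$ gives $a\in\mathcal{G}^{\la}$, so $\mathcal{G}^{\la}\supseteq\ss(\Rn)$, which is stronger than mere density. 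The two share the product ansatz $\phi=\psi\otimes\rho$ and the observation that the $\ss_{0}$ constraint only lives in the $t$-factor, but your proof is constructive and yields the extra information that the G\"arding space contains all of $\ss(\Rn)$, whereas the paper's indirect argument gives only density; your approach also sidesteps the delicate ``$F$ is a bounded polynomial, hence constant'' step and the implicit appeal to the density of $\ss(\RR^{2n})$ in $L^{2}(\RR^{2n})$ at the end of the paper's proof. Both are valid.
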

\begin{proof}
Take any $g\in L^{2}(\Rn)$ such that for all $\phi\in S_{0},\ f\in L^{2}(\Rn)$ we have
$\langle g,\pi_{\phi}^{\la}f\rangle =0.$ We will show that $g=0$ a.e.
Consider only those functions $\phi$ which can be decomposed as $\phi(x,y,t)=\phi_{1}(x,y)\phi_{2}(t).$ Then
\begin{align*}
0&=\langle g,\pi_{\phi}^{\la}f\rangle =\langle g,\int_{\Hn}\phi(h)\pi_{h}^{\la}fdh\rangle=\int_{\Hn}\phi(h)\langle g,\pi_{h}^{\la}f\rangle dh\\
&=\int_{\RR}\left(\int_{\RR^{2n}}\phi_{1}(x,y)\langle g,\pi_{h}^{\la}f\rangle dxdy\right)\phi_{2}(t)dt=\int_{\RR}F(t)\phi_{2}(t)dt.
\end{align*}
It follows that $\langle\hat{F},\hat{\phi_{2}}\rangle=0$, which, by the structure of $\ss_{0} (\Hn)$, implies that 
$$\supp\hat{F}\subseteq\{0\}.$$
Hence $\hat{F}=\sum_{n=0}^{N}c_{n}\delta_{0}^{(n)}.$
By the Schwarz inequality
$||F||_{\8}\leq ||f||_{2}||g||_{2}||\phi_{1}||_{1}$. So $F\in L^{\8}(\RR)$ and at the same time
$F(t)=\sum_{n=0}^{N}c_{n}t^{n}.$
Consequently, there is no other option than $F=const$, which means that
$$\int_{\RR^{2n}}\phi_{1}(x,y)\langle g,\pi_{h}^{\la}f\rangle dxdy=c_{\phi_{2}}.$$
By the density of $S(\RR^{2n})$ in $L^{2}(\RR^{2n})$, we have that the expression $\langle g,\pi_{h}^{\la}f\rangle$ does not depend on the variable $t$. 
So $(\pi_{(0,0,t)}^{\la}-I)g=0$ for all $t$,
which leads to a contradiction unless $g=0$ a.e.
\end{proof}
\begin{lemma}
Let $K,L$ be flag kernels such that $K\upharpoonright_{\ss_{0}(\Hn)}=L\upharpoonright_{\ss_{0}(\Hn)}$. Then, $K=L$.
\end{lemma}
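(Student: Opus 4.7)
Set $M = K - L$. Since the flag-kernel conditions are linear, $M$ is itself a flag kernel; by hypothesis $\langle M, f\rangle = 0$ for every $f \in \ss_0(\Hn)$. The plan is to force $\widehat{M}$ to be concentrated on the hyperplane $\{\lambda = 0\} \subset \mathbb{H}_n$, apply Schwartz's structure theorem to dismantle it, extract polynomial behaviour in $t$, and then invoke the cancellation axiom for flag kernels to conclude $M = 0$.

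First I would observe that the Fourier image of $\ss_0(\Hn)$ contains $C_c^\infty(\mathbb{H}_n \setminus \{\lambda = 0\})$, since by Definition \ref{lambda} an $\ss_0$-function is precisely a Schwartz function whose Fourier transform has bounded $\lambda$-support avoiding $0$. Dualising, $\widehat{M}$ annihilates every test function in $C_c^\infty(\mathbb{H}_n \setminus \{\lambda = 0\})$, so $\supp \widehat{M} \subseteq \{\lambda = 0\}$. The classical structure theorem for tempered distributions supported on a hyperplane then yields
$$\widehat{M} = \sum_{k=0}^{N} T_k(w) \otimes \delta^{(k)}(\lambda)$$
for some $N$ and tempered distributions $T_k$ on $\mathbb{R}^{2n}$. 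Applying the inverse Fourier transform gives $M(v,t) = \sum_{k=0}^{N} a_k\, \check{T}_k(v)\, t^k$ for nonzero constants $a_k$.

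Next, $M$ is smooth on $\{v \ne 0\}$ and obeys the flag-kernel size estimate $|M(v,t)| \lesssim |v|^{-2n}(|v|+|t|)^{-2}$; extracting coefficients by pairing with dual test functions in $t$ identifies each $\check{T}_k$ as a smooth function on $\{v \ne 0\}$. Fixing $v \ne 0$, $M(v,\cdot) = \sum_k a_k \check{T}_k(v)\, t^k$ is a polynomial in $t$ that tends to $0$ as $|t| \to \infty$ and hence vanishes identically, forcing $\check{T}_k(v) = 0$ whenever $v \ne 0$. So each $\check{T}_k$ is a tempered distribution supported at the origin of $\mathbb{R}^{2n}$, and a second application of the structure theorem gives $\check{T}_k = \sum_{|\alpha| \le N_k} c_{k,\alpha}\, \d^\alpha \delta_0$; putting the pieces together,
$$M = \sum_{k,\alpha} b_{k,\alpha}\, (\d^\alpha \delta_0)(v) \otimes t^k.$$

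The last step invokes the cancellation-style description of flag kernels, equivalent to the Fourier-side definition via Nagel-Ricci-Stein Theorem 2.3.9: for every $\phi \in \ss(\mathbb{R}^{2n})$, $f \mapsto \langle M, \phi \otimes f\rangle$ is a Calder{\'o}n-Zygmund kernel on $\GG_2$. A direct computation shows that this pairing is represented by the polynomial $P_\phi(t) = \sum_{k,\alpha} (-1)^{|\alpha|} b_{k,\alpha}\, (\d^\alpha \phi)(0)\, t^k$, and a nonzero polynomial does not decay at infinity, so $P_\phi \equiv 0$. Varying $\phi$ so that the jets $(\d^\alpha \phi)(0)$ realise arbitrary values forces $b_{k,\alpha} = 0$ for every $(k, \alpha)$, whence $M = 0$ and $K = L$. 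The main obstacle is this concluding step, in which the genuine cancellation baked into the flag-kernel class must be imported through the cited Nagel-Ricci-Stein equivalence; the earlier steps are purely structural.
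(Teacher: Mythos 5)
Your argument is correct, but it takes a much longer road than the paper needs, and in doing so it passes over the single observation that makes the lemma nearly immediate. You and the paper share the opening step: pairing $\widehat{K-L}$ against $\widehat{\ss_0}$, which contains all of $C_c^\infty(\HH_n\setminus\{\lambda=0\})$, forces $\supp\widehat{K-L}\subseteq\{\lambda=0\}$. The paper then simply recalls the remark made immediately after Definition~1: the flag-kernel multiplier estimates with $\alpha=\beta=0$ give $\widehat{K},\widehat{L}\in L^\infty(\HH_n)$. An $L^\infty$ function vanishing on $\{\lambda\neq 0\}$ is zero a.e.\ on $\HH_n$ because the hyperplane $\{\lambda=0\}$ is Lebesgue-null, so $\widehat{K}=\widehat{L}$ and $K=L$. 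That's the whole proof. Everything you do afterwards --- the structure theorem for tempered distributions on a hyperplane, the inverse Fourier transform, the identification of the coefficients $\check T_k$ as functions away from $v=0$, the polynomial decay argument, the second structure theorem at the origin, and finally the cancellation axiom --- is a correct but elaborate way of re-deriving the fact that $\widehat{K-L}$ carries no singular mass on $\{\lambda=0\}$, which the $L^\infty$ observation already encodes. Nothing in your chain is wrong (the $T_k$ are tempered distributions of finite order, the smoothness of $\check T_k$ off $\{v=0\}$ does follow by pairing with a dual moment basis in $t$, and the final polynomial $P_\phi$ must vanish since a nonzero polynomial cannot satisfy Calder\'on--Zygmund decay), but the paper's route renders all of it superfluous. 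The practical lesson is that the boundedness of the flag multiplier is not an incidental feature but the operative one here: it is precisely why supporting $\widehat{K-L}$ on a null set kills it.
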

\begin{proof}
If $\langle K,f\rangle =\langle L,f\rangle$ for $f\in\ss_{0}(\Hn)$, then
$\langle K-L,f\rangle=0$ and so
$\langle\widehat{K-L},\widehat{f}\rangle =0$. Therefore from the definition of the class $\ss_{0}(\Hn)$ for every $\la$ nonzero element of the center of $\mathbb{H}_{n}$, we have $\widehat{K-L}=0.$ Thus $\widehat{K}=\widehat{L}$ as elements of $L^{\infty}(\mathbb{H}_{n})$, so $K=L$ in $\ss'(\Hn)$.
\end{proof}
\begin{definition}
We denote by $\mathcal{B}_{0}(\Hn)$ the class of all smooth functions such that their Fourier transforms are bounded measures whose $\la$-support does not contain $0$. One can norm this space with $\Vert f\Vert_{\mathcal{B}_{0}}=\Vert\widehat{f}\Vert_{\mathcal{M}}$, where $\Vert\cdot\Vert_{\mathcal{M}}$ denotes the total variation of a measure.
\end{definition}
Observe that $\ss_{0}(\Hn)\subset\mathcal{B}_{0}(\Hn)$. Moreover it also contains objects of type $C_{f,g}^{\la}$, as $\Vert C_{f,g}^{\la}\Vert_{\mathcal{B}_{0}}\leqslant\Vert\widehat{c_{f,g}}\Vert_{1}$. Now as $\ss_{0}(\Hn)$ is total for flag kernels we can extend such a kernel from $\ss_{0}(\Hn)$ to $\mathcal{B}_{0}(\Hn)$ by the formula
\[
\langle K,f\rangle=\int_{\mathbb{H}_{n}}\widehat{K}(w,\la)d\widehat{f}(w,\la).
\]
Continuity is gained for free as $|\langle K,f\rangle|\leqslant\Vert \widehat{K}\Vert_{\infty}\Vert\widehat{f}\Vert_{\mathcal{M}}$.
Now we can define the representation of a flag kernel. Suppose first that $K\in\ss_{0}(\Hn)$. Then,
\begin{align*}
\langle\pi_{K}^{\la}f,g\rangle =\langle\int_{\Hn} K(h)\pi_{h}^{\la}fdh,g\rangle=\int_{\Hn} K(h)\langle\pi_{h}^{\la}f,g\rangle dh=\langle K,C^{\la}_{f,g}\rangle,
\end{align*}
for $f,g\in\ss (\Rn)$.
Hence, for every flag kernel we put $\langle\pi_{K}^{\la}f,g\rangle:=\langle K,C^{\la}_{f,g}\rangle$ as a weak definition of its representation. Observe next that if $\la>0$
\begin{align*}
\langle\pi_{K}^{\la}f,g\rangle &=\int\int K(u,t)c_{f,g}(|\la|^{1/2}u)e^{2\pi it\la}dudt\\
&=\int\int\int\widetilde{\widehat{K}}(\xi,\eta,r)|\la|^{-n}\widehat{c_{f,g}}(\frac{\xi}{|\la|^{1/2}},\frac{\eta}{|\la|^{1/2}})d\xi d\eta d\delta_{\la}(r)\\
&=\int\int\widetilde{\widehat{K}}(|\la|^{1/2}\xi,|\la|^{1/2}\eta,\la)\widehat{c_{f,g}}(\xi,\eta)d\xi d\eta.
\end{align*}
Similar calculation for $\la<0$ leads to a conclusion that $\pi_{K}^{\la}$ is a pseudodifferential operator with the KN symbol
$$\widetilde{\widehat{K}}(\sgn(\la)|\la|^{1/2}\xi,|\la|^{1/2}\eta,\la).$$
\begin{lemma}
Let $K$ be a flag kernel and $\phi$ in $\ss_{0} (\Hn)$. Then, the operators $\pi_{K\star\phi}^{\la}$ and $\pi_{K}^{\la}\pi_{\phi}^{\la}$ are equal.  
\end{lemma}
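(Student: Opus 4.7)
The plan is to verify the claimed operator identity weakly: it is enough to show
$$\langle\pi_{K\star\phi}^{\la}f,g\rangle=\langle\pi_{K}^{\la}\pi_{\phi}^{\la}f,g\rangle,\qquad f,g\in\ss(\Rn),$$
because both sides are bounded operators on $L^{2}(\Rn)$ and $\ss(\Rn)$ is dense.

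For the left-hand side, I would use the first lemma of this section to see that $K\star\phi\in\ss_{0}(\Hn)\subset L^{1}(\Hn)$, so $\pi_{K\star\phi}^{\la}$ is just the classical integrated representation and
$$\langle\pi_{K\star\phi}^{\la}f,g\rangle=\int_{\Hn}(K\star\phi)(h)\,C_{f,g}^{\la}(h)\,dh=\langle K\star\phi,C_{f,g}^{\la}\rangle,$$
the last pairing being the $\mathcal{B}_{0}$ extension (which reduces to the Lebesgue integral here since $K\star\phi$ is a Schwartz function). For the right-hand side, a direct computation from the formula for $\pi_{h}^{\la}$ shows $\pi_{\phi}^{\la}f\in\ss(\Rn)$, hence $C_{\pi_{\phi}^{\la}f,g}^{\la}\in\mathcal{B}_{0}(\Hn)$ and by the weak definition of $\pi_{K}^{\la}$,
$$\langle\pi_{K}^{\la}\pi_{\phi}^{\la}f,g\rangle=\langle K,C_{\pi_{\phi}^{\la}f,g}^{\la}\rangle.$$
The lemma is therefore equivalent to the single identity $\langle K\star\phi,C_{f,g}^{\la}\rangle=\langle K,C_{\pi_{\phi}^{\la}f,g}^{\la}\rangle$.

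Next I would record the pointwise formula
$$C_{\pi_{\phi}^{\la}f,g}^{\la}(h_{1})=\int_{\Hn}\phi(h_{2})\,C_{f,g}^{\la}(h_{1}h_{2})\,dh_{2},$$
which is immediate from the homomorphism property $\pi_{h_{1}h_{2}}^{\la}=\pi_{h_{1}}^{\la}\pi_{h_{2}}^{\la}$ and the integrated definition of $\pi_{\phi}^{\la}$. Combined with $(K\star\phi)(h)=\int K(h_{1})\phi(h_{1}^{-1}h)\,dh_{1}$ and the substitution $h=h_{1}h_{2}$, the required identity becomes a Fubini interchange between the distributional pairing with $K$ and the Lebesgue integral in $h_{2}$.

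The main obstacle is to justify this Fubini step rigorously, since $K$ is only a flag kernel and $C_{f,g}^{\la}$ lies in $\mathcal{B}_{0}(\Hn)$ but not in $\ss(\Hn)$. I plan to carry it out on the Fourier side, where the extended pairing reads $\langle K,F\rangle=\int\widehat{K}\,d\widehat{F}$ with $\widehat{K}\in L^{\infty}(\HH_{n})$. The formulas of Section~5 present $\widehat{C_{f,g}^{\la}}$ as a rescaling of the finite measure $\widehat{c_{f,g}}\otimes\delta_{\la}$, while $\widehat{K\star\phi}=\widehat{K}\,\widehat{\phi}$ is a genuine Schwartz function because $\widehat{\phi}$ is Schwartz with $\la$-support bounded away from zero. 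Both pairings then rewrite as absolutely convergent integrals of the bounded function $\widehat{K}$ against related finite measures, classical Fubini identifies them, and the lemma follows.
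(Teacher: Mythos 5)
You follow the paper's structural plan: compute
$C_{\pi_{\phi}^{\la}f,g}^{\la}(h)=\int\phi(h')C_{f,g}^{\la}(hh')\,dh'=C_{f,g}^{\la}\star\widetilde{\phi}(h)$
and reduce the lemma to the single pairing identity
$\langle K\star\phi, C_{f,g}^{\la}\rangle=\langle K, C_{\pi_{\phi}^{\la}f,g}^{\la}\rangle$.
Your instinct that the interchange of the $K$-pairing with the $h'$-integral is the delicate point is also reasonable; the paper passes through this step rather briskly.

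The proposed Fourier-side justification, however, rests on a false identity: $\widehat{K\star\phi}$ is \emph{not} $\widehat{K}\,\widehat{\phi}$. Since $\star$ is convolution on the non-abelian group $\Hn$, the abelian Fourier transform carries it to the twisted product, $\widehat{K\star\phi}=\widehat{K}\#\widehat{\phi}$ with $a\#b:=(a^{\vee}\star b^{\vee})^{\wedge}$ --- exactly the operation used in the preceding lemma and in G{\l}owacki's symbolic calculus. The pointwise-product rule $\widehat{u\star v}=\widehat{u}\,\widehat{v}$ would require $\star$ to be Euclidean convolution, or one of the factors to be a central measure of the form $\psi^{\vee}$ with $\psi=\psi(\la)$; a general $\phi\in\ss_0(\Hn)$ is not central. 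Consequently $\langle K\star\phi,C_{f,g}^{\la}\rangle$ does not rewrite as an integral of $\widehat{K}$ against the finite measure $\widehat{\phi}\cdot\widehat{C_{f,g}^{\la}}$, and the ``classical Fubini'' you invoke at the end has no correct object to act on. What must actually be verified on the Fourier side is the adjoint-type identity
$\int(\widehat{K}\#\widehat{\phi})\,d\widehat{C_{f,g}^{\la}}=\int\widehat{K}\,d\bigl(\widehat{C_{f,g}^{\la}}\#\widehat{\widetilde{\phi}}\,\bigr)$,
in which the central-variable twist in $\#$ must be handled explicitly. One way to do this is by approximation, reducing to the case where $K$ is replaced by an honest function (so that the interchange is elementary Fubini) and then passing to the limit on the Fourier side using $\widehat{K}\in L^{\infty}(\HH_{n})$ together with dominated convergence against the finite measures $\widehat{C_{f,g}^{\la}}$ and $\widehat{C_{\pi_{\phi}^{\la}f,g}^{\la}}$. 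As written, your argument does not supply this and therefore has a gap at precisely the step you flagged as the main obstacle.
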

\begin{proof}
First one can calculate that
\begin{align*}
C_{\pi_{\phi}^{\la}f,g}^{\la}(h)&=\int\pi_{h}^{\la}\pi_{\phi}^{\la}f(s)g(s)ds=\int\pi_{h}^{\la}\int\phi(h')\pi_{h'}^{\la}f(s)dh'g(s)ds\\
&=\int\phi(h')\int\pi_{hh'}^{\la}f(s)g(s)dsdh'=\int\widetilde{\phi}(h')C_{f,g}^{\la}(hh'^{-1})dh'\\
&=C_{f,g}^{\la}\star\widetilde{\phi}(h).
\end{align*}
Now using fact that $K\star\phi$ is in $\ss_{0} (\Hn)$ 
\begin{align*}
\langle\pi_{K\star\phi}^{\la}f,g\rangle& =\int K\star\phi(h)\langle\pi_{h}^{\la}f,g\rangle dh=\int K\star\phi(h)C_{f,g}^{\la}(h)dh\\
&=\int K(h)C_{f,g}^{\la}\star\widetilde{\phi}(h)dh=\int K(h)C_{\pi_{\phi}^{\la}f,g}^{\la}(h)dh=\langle\pi_{K}^{\la}\pi_{\phi}^{\la}f,g\rangle.
\end{align*}
\end{proof}
\begin{corollary}
Let $K_{1},K_{2}$ be flag kernels. The operators $\pi_{K_{1}\star K_{2}}^{\la}$ and $\pi_{K_{1}}^{\la}\pi_{K_{2}}^{\la}$ are equal.
\end{corollary}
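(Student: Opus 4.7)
The approach is to reduce the corollary to the preceding lemma through an approximation via the G{\"a}rding space $\mathcal{G}^{\la}$. By Theorem \ref{splot} the distribution $L:=K_{1}\star K_{2}$ is itself a flag kernel, so both $\pi_{L}^{\la}$ and the composition $\pi_{K_{1}}^{\la}\pi_{K_{2}}^{\la}$ are well-defined operators on $L^{2}(\Rn)$ through the weak prescription of the previous section.

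First I would fix an auxiliary $\phi\in\ss_{0}(\Hn)$ and apply the preceding lemma three times. By the first lemma of this section, $K_{2}\star\phi$ is again in $\ss_{0}(\Hn)$, and the flag-kernel convolution is associative, so
\[
\pi_{K_{1}\star K_{2}}^{\la}\pi_{\phi}^{\la}
=\pi_{(K_{1}\star K_{2})\star\phi}^{\la}
=\pi_{K_{1}\star(K_{2}\star\phi)}^{\la}
=\pi_{K_{1}}^{\la}\pi_{K_{2}\star\phi}^{\la}
=\pi_{K_{1}}^{\la}\pi_{K_{2}}^{\la}\pi_{\phi}^{\la},
\]
where the first and the last two equalities each invoke the preceding lemma (with flag kernel $L$, then $K_{1}$ paired with the $\ss_{0}$ function $K_{2}\star\phi$, and finally $K_{2}$ paired with $\phi$). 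Hence $\pi_{L}^{\la}$ and $\pi_{K_{1}}^{\la}\pi_{K_{2}}^{\la}$ coincide on every vector of the form $\pi_{\phi}^{\la}f$ with $\phi\in\ss_{0}(\Hn)$ and $f\in L^{2}(\Rn)$, i.e.\ on the whole G{\"a}rding subspace $\mathcal{G}^{\la}$.

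Since $\mathcal{G}^{\la}$ is dense in $L^{2}(\Rn)$ by an earlier lemma, the proof is completed by extending the identity by continuity, which requires $L^{2}$-boundedness of both sides. This is the only mildly delicate point: reading off the pseudodifferential description of $\pi_{K}^{\la}$ just derived, one sees that its KN symbol $\widetilde{\widehat{K}}(\sgn(\la)|\la|^{1/2}\xi,|\la|^{1/2}\eta,\la)$ satisfies bounds of the H{\"o}rmander class $S^{0}_{0,0}$ uniformly in $\la$, thanks to the homogeneity built into the flag-kernel estimates; Calder{\'o}n--Vaillancourt then supplies a uniform $L^{2}$-bound and the density argument concludes. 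The algebraic backbone of the corollary is thus essentially routine, amounting to three applications of the preceding lemma and associativity of the flag-kernel convolution.
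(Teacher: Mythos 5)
Your proposal is correct and follows essentially the same route as the paper: reduce to the preceding lemma via the factorization $(K_1\star K_2)\star\phi = K_1\star(K_2\star\phi)$ with $K_2\star\phi\in\ss_0(\Hn)$, establish agreement on the G{\aa}rding space, and conclude by density. Your explicit remark that the density argument requires uniform $L^2$-boundedness of $\pi_K^{\la}$ (via the symbol estimate and Calder\'on--Vaillancourt) is a welcome bit of care that the paper leaves implicit at this point and only formalizes later, but it does not change the substance of the argument.
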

\begin{proof}
As flag kernels form an algebra, the above lemma implies
\begin{align}\label{splatanie}
\pi_{K_{1}\star K_{2}}^{\la}\pi_{\phi}^{\la}f=\pi_{K_{1}\star K_{2}\star\phi}^{\la}f=\pi_{K_{1}}^{\la}\pi_{K_{2}\star\phi}^{\la}f=\pi_{K_{1}}^{\la}\pi_{K_{2}}^{\la}\pi_{\phi}^{\la}f,
\end{align}
where the second equality follows by the fact that $K_{2}\star\phi\in\ss_{0} (\Hn)$. \ref{splatanie} proves that the operators agree on vectors of type $\pi_{\phi}^{\la}f$ which are dense in $L^{2}(\Rn)$ when $\phi\in\ss_{0} (\Hn)$, $f\in L^{2}(\Rn)$.
\end{proof} 
 

\section{Representations of $L^{2}$}

We start from a simple calculation of the Kohn-Nirenberg symbol of $\pi_{f}^{\la}$, where $f$ is a Schwartz function. Let $\la$ be positive. We have
\begin{align*}
\pi_{f}^{\la}u(s)&=\int f(x,y,t)e^{2\pi it\la}e^{2\pi i\sqrt{\la}ys}u(s+\sqrt{\la}x)dxdydt\\
&=\int f(x,\sqrt{\la}s^{\vee},\la^{\vee})u(s+\sqrt{\la}x)dx
=|\la|^{-n/2}\int f(\frac{x-s}{\sqrt{\la}},\sqrt{\la}s^{\vee},\la^{\vee})u(x)dx\\
&=|\la|^{-n/2}\int\int f(\frac{x-s}{\sqrt{\la}},\sqrt{\la}s^{\vee},\la^{\vee})e^{2\pi ix\xi}\widehat{u}(\xi)d\xi dx\\
&=\int\int f(x,\sqrt{\la}s^{\vee},\la^{\vee})e^{2\pi i\sqrt{\la}x\xi}e^{2\pi is\xi}\widehat{u}(\xi)d\xi dx\\
&=\int f^{\vee}(\sqrt{\la}\xi,\sqrt{\la}s,\la)e^{2\pi is\xi}\widehat{u}(\xi)d\xi.
\end{align*}
As we can see in this case the symbol of $\pi_{f}^{\la}$ is also $$a(\xi,\eta)=\widetilde{\widehat{f}}(\sgn(\la)|\la|^{1/2}\xi,|\la|^{1/2}\eta,\la).$$
Let for a moment $x,y\in\Hn$ and $A$ be a Hilbert-Schmidt operator on $\ss(\Hn)$ with a kernel $\Omega$. One can calculate that
\begin{align*}
Au(x)=\int \Omega(x,y)u(y)dy=\int\Omega(x,y)\int e^{2\pi iy\xi}\widehat{u}(\xi)d\xi dy=\int\Omega(x,\xi^{\vee})\widehat{u}(\xi)d\xi.
\end{align*}
It is easy to see that if $a$ is the KN symbol of $A$, then
$$a(\xi,\eta)=e^{-2\pi i\xi\eta}\Omega(\xi,\eta^{\vee}),$$
and by the Plancharel formula,
\begin{align*}
\Vert A\Vert_{HS}=\Vert\Omega\Vert_{2}=\int\int|\Omega(\xi,y)|^{2}d\xi dy=\int\int|e^{-2\pi i\xi\eta}\Omega(\xi,\eta^{\vee})|^{2}d\xi d\eta=\Vert a\Vert_{2}.
\end{align*}
The sign change on the first coordinate, in a situation where $\la$ is negative, have no impact on the obstacles with which we struggle. Thus from now on in all calculations we will disregard this difference.  
\begin{lemma}\label{l2}
Let $f\in L^{2}(\Hn)$ and $\{f_{n}\}_{n}\subset\ss_{0} (\Hn)$ be such that $f_{n}\rightarrow f$ in $L^{2}$. For almost every $\la$, there exists a subsequence $\{f_{n_{k}(\la)}\}_{k}$ such that $\pi^{\la}_{f_{n_{k}(\la)}}$ tend to an operator $A^{\la}$ in the Hilbert-Schmidt norm. $A^{\la}$ depends neither on the chosen sequence $f_{n}$ nor on its subsequence $f_{n_{k}(\la)}$. Moreover the KN symbol of $A^{\la}$ is $a_{\la}(w)=\widetilde{\widehat{f}}(|\la|^{1/2}w,\la)$.
\end{lemma}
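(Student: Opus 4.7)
The core tool is the Plancherel-type identity for Hilbert-Schmidt operators: a pseudodifferential operator $A$ with KN symbol $a$ satisfies $\Vert A\Vert_{HS}=\Vert a\Vert_{L^{2}}$, as noted in Section 6 via the kernel-symbol correspondence. Combined with the already-derived fact that, for $g\in\ss(\Hn)$, $\pi_{g}^{\la}$ has KN symbol $\widetilde{\widehat{g}}(|\la|^{1/2}\xi,|\la|^{1/2}\eta,\la)$, the change of variables $(u,v)=|\la|^{1/2}(\xi,\eta)$ (Jacobian $|\la|^{n}$) and integration in $\la$ yield the key identity
\begin{equation*}
\int_{\RR}|\la|^{n}\,\Vert\pi_{g}^{\la}\Vert_{HS}^{2}\,d\la=\Vert\widehat{g}\Vert_{L^{2}(\HH_{n})}^{2}=\Vert g\Vert_{L^{2}(\Hn)}^{2},\qquad g\in\ss(\Hn).
\end{equation*}
Applied to $g=f_{n}-f_{m}$ (which lies in $\ss_{0}(\Hn)\subset\ss(\Hn)$), this turns $L^{2}$-Cauchyness of $\{f_{n}\}$ into control of $\pi_{f_{n}}^{\la}$ in HS after a weight $|\la|^{n}$.

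Extract a sparse subsequence $\{f_{n_{k}}\}$ with $\Vert f_{n_{k}}-f_{n_{k+1}}\Vert_{2}\leqslant 2^{-k}$ and set $h_{k}(\la):=\Vert\pi_{f_{n_{k}}}^{\la}-\pi_{f_{n_{k+1}}}^{\la}\Vert_{HS}$. The identity gives $\Vert|\la|^{n/2}h_{k}\Vert_{L^{2}(d\la)}\leqslant 2^{-k}$, hence $\sum_{k}|\la|^{n/2}h_{k}$ converges in $L^{2}(d\la)$ and is therefore finite for a.e. $\la$; since $|\la|^{n/2}>0$ for $\la\neq 0$, this means $\sum_{k}h_{k}(\la)<\infty$, so $\{\pi_{f_{n_{k}}}^{\la}\}_{k}$ is Cauchy in HS and possesses a limit $A^{\la}$ for a.e. $\la$. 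Because HS convergence coincides with $L^{2}$ convergence of symbols, the KN symbol of $A^{\la}$ is the $L^{2}(\RR^{2n})$-limit of $\widetilde{\widehat{f_{n_{k}}}}(|\la|^{1/2}\cdot,\la)$. To pin this limit down, combine $\widehat{f_{n_{k}}}\to\widehat{f}$ in $L^{2}(\HH_{n})$ with Fubini: after a further subsequence extraction, the slices $\widehat{f_{n_{k}}}(\cdot,\la)\to\widehat{f}(\cdot,\la)$ in $L^{2}(\RR^{2n})$ for a.e. $\la$, and the $|\la|^{1/2}$-dilation (an $L^{2}$-isomorphism up to scaling) shows the limit is indeed $\widetilde{\widehat{f}}(|\la|^{1/2}w,\la)$.

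Independence of $A^{\la}$ from the chosen sequence then comes for free: any second approximating sequence in $\ss_{0}(\Hn)$ produces, by the same argument, an HS limit whose KN symbol is again $\widetilde{\widehat{f}}(|\la|^{1/2}w,\la)$ a.e., forcing agreement of the two limits. The step requiring most care is the nested subsequence bookkeeping, where one first secures HS-Cauchyness off a $\la$-null set and then refines the subsequence further to align slicewise Fourier convergence with the HS limit for symbol identification; beyond this organizational point there is no genuine analytic obstacle.
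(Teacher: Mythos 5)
Your proposal is correct and rests on exactly the same Plancherel identity
\[
\Vert g\Vert_{L^{2}(\Hn)}^{2}=\int_{\RR}|\la|^{n}\,\Vert\pi_{g}^{\la}\Vert_{HS}^{2}\,d\la ,
\]
but the passage from this identity to the a.e.\ conclusion is organized differently from the paper's. The paper defines $A^{\la}$ \emph{up front} as the HS operator whose symbol is $\widetilde{\widehat{f}}(|\la|^{1/2}\cdot,\la)$ (well defined for a.e.\ $\la$ because the slices of $\widehat{f}$ lie in $L^{2}(\RR^{2n})$), notes that the Plancherel identity gives $\int|\la|^{n}\Vert\pi_{f_{n}}^{\la}-A^{\la}\Vert_{HS}^{2}\,d\la=\Vert f_{n}-f\Vert_{2}^{2}\to 0$, and then invokes Fatou's lemma to get $\liminf_{n}\Vert\pi_{f_{n}}^{\la}-A^{\la}\Vert_{HS}=0$ for a.e.\ $\la$; this furnishes a $\la$-dependent subsequence converging to $A^{\la}$, and the symbol identification is automatic because $A^{\la}$ was prescribed in advance. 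You instead extract a rapidly convergent subsequence once and for all, telescope, and use $\sum_{k}h_{k}(\la)<\infty$ a.e.\ to prove that $\{\pi_{f_{n_{k}}}^{\la}\}_{k}$ is HS-Cauchy for a.e.\ $\la$, then identify the limit's symbol in a separate step via slicewise $L^{2}$ convergence. Your route is a bit longer but yields the slightly stronger conclusion that a single subsequence, independent of $\la$, works off a single null set, and it avoids having to assert a priori that $A^{\la}$ is HS. Note also that you could have sidestepped the extra Fubini/slicing step in the identification: once HS-Cauchyness along a fixed subsequence is known, the same Plancherel identity (with $A^{\la}$ defined as in the paper) directly forces the HS limit to agree with $A^{\la}$ for a.e.\ $\la$. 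Both arguments are sound; the genuine difference is Fatou with a $\la$-dependent subsequence versus a telescoping/$L^{2}$-to-a.e.\ argument with a fixed one.
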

\begin{proof}
By Plancherel's formula
\begin{align*}
\Vert f_{n}-f\Vert_{2}^{2}&=\int |\widehat{f_{n}}(w,\la)-\widehat{f}(w,\la)|^{2}dwd\la\\
&=\int|\la|^{n}\int|\widetilde{\widehat{f_{n}}}(|\la|^{1/2}w,\la)-\widetilde{\widehat{f}}(|\la|^{1/2}w,\la)|^{2}dwd\la\\
&=\int|\la|^{n}\Vert\pi_{f_{n}}^{\la}-A^{\la}\Vert_{HS}^{2}d\la,
\end{align*}
where $A^{\la}$ is the Hilbert-Schmidt operator with the symbol $(w,\la)\mapsto\widetilde{\widehat{f}}(|\la|^{1/2}w,\la)$.
By Fatou's lemma $\liminf\Vert\pi_{f_{n}}^{\la}-A^{\la}\Vert_{HS}=0$ for almost every $\la$.
Therefore, for almost every $\la$, there exists a subsequence $f_{n_{k}(\la)}$ such that $$\Vert\pi_{f_{n_{k}(\la)}}^{\la}-A^{\la}\Vert_{HS}\rightarrow 0.$$
\end{proof}
Let $\widehat{f}^{\la}(w):=\widehat{f}(|\la|^{1/2}w,\la)$.
Lemma \ref{l2} says that for every $u,v\in\ss(\Hn)$, $\langle\widehat{f}^{\la}_{n},\widehat{c_{u,v}}\rangle$ tends to $\langle\widehat{f}^{\la},\widehat{c_{u,v}}\rangle$ a.e. which implies that $\langle\pi_{f_{n}}^{\la}u,v\rangle$ must have a limit. This limit is $A^{\la}$ an it will be denoted by $\pi_{f}^{\la}$. Nevertheless, for a given $L^{2}$ function, the operator exists only for a.e. $\la$.
\begin{lemma}
Let $K$ be a flag kernel on $\Hn$. Then, for every $f\in L^{2}(\Hn)$,
$$\pi_{K}^{\la}\pi_{f}^{\la}=\pi_{K\star f}^{\la}.$$
\end{lemma}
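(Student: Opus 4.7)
The plan is to approximate $f \in L^2(\HH^n)$ by a sequence $\{f_n\} \subset \ss_0(\HH^n)$ with $f_n \to f$ in $L^2$ (possible by density of $\ss_0$), apply the identity already established on $\ss_0$, and pass to the limit in a Hilbert-Schmidt sense for almost every $\lambda$. For each $n$, since $f_n \in \ss_0(\HH^n)$, the preceding lemma gives
\[
\pi_{K \star f_n}^{\lambda} = \pi_K^{\lambda}\, \pi_{f_n}^{\lambda}, \qquad \lambda \neq 0.
\]

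The plan on the right-hand side: Lemma \ref{l2} produces, after passing to a subsequence $\{f_{n_k}\}$, convergence $\pi_{f_{n_k}}^{\lambda} \to \pi_f^{\lambda}$ in the HS norm for almost every $\lambda$. The operator $\pi_K^{\lambda}$ is bounded on $L^{2}(\RR^n)$ with norm controlled by $\Vert \widehat{K}\Vert_{\infty}$, because its KN symbol $\widetilde{\widehat{K}}(\sgn(\lambda)|\lambda|^{1/2}\xi,|\lambda|^{1/2}\eta,\lambda)$ is bounded. Left composition with a bounded operator preserves HS convergence, so $\pi_K^{\lambda}\pi_{f_{n_k}}^{\lambda} \to \pi_K^{\lambda}\pi_f^{\lambda}$ in HS norm for a.e. $\lambda$.

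The plan on the left-hand side: by Theorem (the $L^2$-boundedness of $\op(K)$), we have $K \star f_n \to K \star f$ in $L^2(\HH^n)$. Applying Lemma \ref{l2} once more to the sequence $\{K \star f_{n_k}\}$, we can extract a further subsequence $\{n_{k_j}\}$ along which, for almost every $\lambda$, $\pi_{K\star f_{n_{k_j}}}^{\lambda} \to \pi_{K\star f}^{\lambda}$ in HS norm. Along this common diagonal subsequence, the identity above passes to the limit for a.e. $\lambda$, yielding
\[
\pi_{K\star f}^{\lambda} = \pi_K^{\lambda}\, \pi_f^{\lambda}.
\]

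The main obstacle to watch is the interplay of the two a.e.-statements: both $\pi_f^{\lambda}$ and $\pi_{K\star f}^{\lambda}$ are defined only up to a null set in $\lambda$, and the extracted subsequence delivered by Lemma \ref{l2} depends on $\lambda$ in principle. The extraction must be done once for $f_n$ and again for $K \star f_n$, and only then can one invoke the identity on $\ss_0$ and conclude on the common full-measure set. Apart from this bookkeeping, the argument is straightforward because the HS norm on the approximations dominates all the weak pairings $\langle \cdot\, u, v\rangle$ with $u,v \in \ss(\RR^n)$ used to define $\pi_f^{\lambda}$.
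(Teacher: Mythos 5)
Your proof is correct and follows essentially the same route as the paper: approximate $f$ by a sequence in $\ss_{0}(\Hn)$, invoke the $\ss_{0}$-version of the identity, and pass to the a.e.-$\la$ limit using Lemma~\ref{l2} for both $\{f_{n}\}$ and $\{K\star f_{n}\}$ (the latter after noting $L^{2}$-boundedness of $\op(K)$), with boundedness of $\pi_{K}^{\la}$ carrying the Hilbert--Schmidt limit through the left composition. One small imprecision: boundedness of $\pi_{K}^{\la}$ on $L^{2}(\Rn)$ does not follow from $\Vert\widehat{K}\Vert_{\infty}$ alone (a merely bounded KN symbol need not give a bounded operator); what is actually used, as the paper makes explicit later, is the Calder\'on--Vaillancourt theorem together with the full $Sym^{0}(\RR^{2n})$ estimates on the symbol $a_{\la}$, though this does not affect the structure of your argument.
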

\begin{proof}
Let $\{f_{n}\}_{n}\subset\ss_{0} (\Hn)$ tend to $f$ in $L^{2}(\Hn)$. By definition
$$\pi_{K\star f}^{\la}=\lim\pi_{K\star f_{n_{k}(\la)}}^{\la},$$ for a.e. $\la$. As $f_{n_{k}(\la)}$ converges to $f$ in $L^{2}(\Hn)$, by Lemma \ref{l2}, $K\star f_{n_{k}(\la)}$ converges to $K\star f$ a.e. Hence
\begin{align*}
\pi_{K}^{\la}\pi_{f}^{\la}=\lim\pi_{K}^{\la}\pi_{f_{n_{k_{s}}(\la)}}^{\la}=\lim\pi_{K\star f_{n_{k_{s}}(\la)}}^{\la}=\pi_{K\star f}^{\la}.
\end{align*}
\end{proof}
Assume that $f\in L^{2}(\Hn)$. Let us continue with the calculation of kernel $\Omega_{f}^{\la}$ of the operator $\pi_{f}^{\la}$. As it has been said before, we have
$$\Omega_{f}^{\la}(\xi,\eta^{\vee})=e^{2\pi i\xi\eta}f^{\vee}(|\la|^{1/2}\xi,|\la|^{1/2}\eta,\la).$$
Therefore,
\begin{align*}
\Omega_{f}^{\la}(\xi,y)=\int e^{-2\pi i\eta(y-\xi)}f^{\vee}(|\la|^{1/2}\xi,|\la|^{1/2}\eta,\la)d\eta=|\la|^{-n/2}f(|\la|^{1/2}\xi^{\vee},\frac{y-\xi}{|\la|^{1/2}},\la^{\vee}).
\end{align*}
Furthermore, by Plancherel's formula
\begin{align*}
\mathfrak{G}_{f}(\la):&=|\la|^{n}\Vert\pi_{f}^{\la}\Vert_{HS}^{2}=|\la|^{n}\Vert\Omega_{f}^{\la}\Vert_{2}^{2}=\int\int|f(|\la|^{1/2}\xi^{\vee},\frac{y-\xi}{|\la|^{1/2}},\la^{\vee})|^{2}d\xi dy\\
&=\int\int|f(x,y,\la^{\vee})|^{2}dxdy.
\end{align*}
The function $\mathfrak{G}_{f}$ is continuous when $f\in\ss (\Hn)$.
Let $A$ be linear, bounded operator on $L^{2}(\Hn)$ (in particular a convolver), $\chi_{E}$ a characteristic function of a set $E\subset\RR$.
Then, from the above calculation we can conclude that $$\chi_{E}(\la)\mathfrak{G}_{Af}(\la)=\mathfrak{G}_{A(\chi_{E}(\la)f)}(\la).$$
Let as recall here that $\Vert f\Vert_{2}^{2}=\int_{\RR^{\star}}\mathfrak{G}_{f}(\la)d\la$.
\begin{lemma}\label{calki}
Let A be a linear, bounded operator on $L^{2}(\Hn)$. Suppose that, for every $f\in L^{2}(\Hn)$ $$\int_{\RR^{\star}}\mathfrak{G}_{Af}(\la)d\la\geqslant c^{2}\int_{\RR^{\star}}\mathfrak{G}_{f}(\la)d\la,$$
then, for almost every $\la$, $\mathfrak{G}_{Af}(\la)\geqslant c^{2}\mathfrak{G}_{f}(\la).$ 
\end{lemma}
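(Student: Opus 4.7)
The plan is to test the hypothesized global inequality against localizations of $f$ in the central spectral variable $\la$, and then promote the resulting family of integral inequalities to an a.e.\ pointwise inequality via the Lebesgue differentiation theorem. The crucial input is the identity
\[
\chi_{E}(\la)\,\mathfrak{G}_{Af}(\la)=\mathfrak{G}_{A(\chi_{E}(\la)f)}(\la),
\]
recorded immediately before the lemma, which reflects the fact that $A$, being translation invariant in the central variable $t$, commutes with multiplication by $\chi_{E}(\la)$ on the central Fourier side.

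First I would fix a measurable set $E\subset\RR^{\star}$ and introduce $g_{E}\in L^{2}(\Hn)$, namely the function whose partial Fourier transform in the central variable equals $\chi_{E}(\la)$ times that of $f$. Since $|\chi_{E}|\leqslant 1$, Plancherel guarantees $g_{E}\in L^{2}(\Hn)$ with $\Vert g_{E}\Vert_{2}\leqslant\Vert f\Vert_{2}$, and by construction
\[
\mathfrak{G}_{g_{E}}(\la)=\chi_{E}(\la)\mathfrak{G}_{f}(\la).
\]
Applying the recorded identity with $g_{E}$ in place of $f$ (and using $\chi_{E}^{2}=\chi_{E}$) gives $\mathfrak{G}_{Ag_{E}}(\la)=\chi_{E}(\la)\mathfrak{G}_{Af}(\la)$.

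Next I would apply the hypothesis to $g_{E}$ and rewrite both sides using the two identities above:
\[
\int_{E}\mathfrak{G}_{Af}(\la)\,d\la
=\int_{\RR^{\star}}\mathfrak{G}_{Ag_{E}}(\la)\,d\la
\geqslant c^{2}\!\int_{\RR^{\star}}\mathfrak{G}_{g_{E}}(\la)\,d\la
=c^{2}\!\int_{E}\mathfrak{G}_{f}(\la)\,d\la.
\]
Because $E$ was an arbitrary measurable subset of $\RR^{\star}$, and both $\mathfrak{G}_{f}$ and $\mathfrak{G}_{Af}$ are nonnegative and integrable on $\RR^{\star}$ (with total integrals $\Vert f\Vert_{2}^{2}$ and $\Vert Af\Vert_{2}^{2}$ respectively), the Lebesgue differentiation theorem applied to the signed measure with density $\mathfrak{G}_{Af}-c^{2}\mathfrak{G}_{f}$ yields the claimed a.e.\ inequality $\mathfrak{G}_{Af}(\la)\geqslant c^{2}\mathfrak{G}_{f}(\la)$.

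I do not anticipate any serious obstacle. The entire content of the argument sits in the pre-lemma identity $\chi_{E}\mathfrak{G}_{Af}=\mathfrak{G}_{A(\chi_{E}f)}$, and what remains is measure-theoretic routine. The only point requiring a sentence of care is verifying $g_{E}\in L^{2}(\Hn)$ so that the hypothesis of the lemma may legitimately be applied to it, which is immediate from Plancherel.
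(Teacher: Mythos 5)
Your argument is correct and rests on exactly the same mechanism as the paper's proof: testing the hypothesis against the $\la$-localizations $\chi_E(\la)f$ and invoking the identity $\chi_E(\la)\mathfrak{G}_{Af}(\la)=\mathfrak{G}_{A(\chi_E(\la)f)}(\la)$ recorded just before the lemma. The only difference is presentational: the paper runs a proof by contradiction, extracting an $\e>0$ and a positive-measure subset $F$ on which the reverse inequality holds with a definite gap, whereas you argue directly that $\int_E\bigl(\mathfrak{G}_{Af}-c^{2}\mathfrak{G}_f\bigr)\,d\la\geqslant 0$ for every measurable $E$ and conclude nonnegativity a.e. Your route is slightly cleaner (no need for the $\e$ and $F$), though invoking the Lebesgue differentiation theorem is heavier machinery than necessary: since $\mathfrak{G}_{Af}-c^{2}\mathfrak{G}_f\in L^{1}(\RR^{\star})$, one can simply take $E=\{\la:\mathfrak{G}_{Af}(\la)<c^{2}\mathfrak{G}_f(\la)\}$ and note that the integral over $E$ being $\geqslant 0$ while the integrand is strictly negative on $E$ forces $|E|=0$.
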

\begin{proof}
Assume a contrario that for a function $g$ and $\la$ in a set $E$ of positive Lebesque measure we have that $\mathfrak{G}_{Ag}(\la)<c^{2}\mathfrak{G}_{g}(\la)$. Then, there exists $\varepsilon$>0 and a subset $F$ of $E$ of positive Lebesque measure such that
$\mathfrak{G}_{Ag}(\la)\leqslant (1-\varepsilon)c^{2}\mathfrak{G}_{g}(\la)$ on $F$. Therefore,
\begin{align*}
c^{2}\int_{\RR^{\star}}\mathfrak{G}_{\chi_{F}(\la)g}(\la)d\la&\leqslant\int_{\RR^{\star}}\mathfrak{G}_{A(\chi_{F}(\la)g)}(\la)d\la=\int_{F}\mathfrak{G}_{Ag}(\la)d\la\\
&\leqslant c^{2}(1-\varepsilon)\int_{F}\mathfrak{G}_{g}(\la)d\la
=c^{2}(1-\varepsilon)\int_{\RR^{\star}}\mathfrak{G}_{\chi_{F}(\la)g}(\la)d\la,
\end{align*}
which is obviously a contradiction. 
\end{proof}
The same holds true for the opposite inequality and the proof is analogous.
\begin{theorem}
Let $K$ be a symmetric flag kernel, such that $\op(K)$ is invertible. The family $\{\pi_{K}^{\la}\}_{\la}$ is uniformly invertible, that is all $\pi_{K}^{\la}$ are invertible and the family of operators $\{(\pi_{K}^{\la})^{-1}\}_{\la}$ is uniformly bounded on $L^{2}(\Rn)$.
\end{theorem}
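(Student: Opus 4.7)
The plan is to interpret the invertibility of $\op(K)$ on $L^2(\Hn)$ as a pointwise-in-$\la$ statement about the representations $\pi_K^\la$. Since $K=K^\star$, the operator $\op(K)$ is self-adjoint, so invertibility yields the lower bound $\Vert\op(K) f\Vert_2\geqslant c\Vert f\Vert_2$ for some $c>0$ and every $f\in L^2(\Hn)$. The Plancherel-type identity $\Vert g\Vert_2^2=\int_{\RR^\star}\mathfrak{G}_g(\la)\,d\la$ combined with $\mathfrak{G}_{K\star f}(\la)=|\la|^n\Vert\pi_K^\la\pi_f^\la\Vert_{HS}^2$ converts this into $\int\mathfrak{G}_{K\star f}(\la)\,d\la\geqslant c^2\int\mathfrak{G}_f(\la)\,d\la$, and Lemma~\ref{calki} applied to $A=\op(K)$ upgrades it to the pointwise inequality
\[
\Vert\pi_K^\la\pi_f^\la\Vert_{HS}\geqslant c\Vert\pi_f^\la\Vert_{HS}\qquad\text{for a.e.\ }\la,
\]
valid for each $f\in L^2(\Hn)$.

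To convert this ``image-only'' bound into a genuine lower bound on $\pi_K^\la\colon L^2(\Rn)\to L^2(\Rn)$, I would probe with $f$'s realizing rank-one Hilbert--Schmidt operators on a slice of $\la$-frequencies. Given $u,v\in\ss(\Rn)$ and $g\in\cc_c^\infty(\RR\setminus\{0\})$, define $f=f_{u,v,g}\in L^2(\Hn)$ by $\widetilde{\widehat{f}}(w,\mu):=g(\mu)\,a_{u\otimes v}(w/|\mu|^{1/2})$, where $a_{u\otimes v}$ is the KN symbol of the rank-one operator $u\otimes v$. The formula for the symbol of $\pi_f^\mu$ established in Section~7 then gives $\pi_f^\mu=g(\mu)\,u\otimes v$ for a.e.\ $\mu$, while a rescaling shows $\Vert f\Vert_2^2=\Vert u\Vert_2^2\Vert v\Vert_2^2\int|g(\mu)|^2|\mu|^n\,d\mu<\infty$. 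Substituting into the displayed inequality and cancelling the common factor $|g(\mu)|\Vert v\Vert_2$ yields $\Vert\pi_K^\mu u\Vert_2\geqslant c\Vert u\Vert_2$ for a.e.\ $\mu\in\supp g$. A countable family of bumps $g$ covering $\RR\setminus\{0\}$ together with a countable dense set $\{u_n\}\subset\ss(\Rn)$ produces a single null set $N$ such that $\Vert\pi_K^\mu u_n\Vert_2\geqslant c\Vert u_n\Vert_2$ for every $n$ and every $\mu\notin N$; uniform $L^2$-boundedness of $\pi_K^\mu$ (which follows from the flag kernel symbol estimates) then extends the bound to every $u\in L^2(\Rn)$.

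To upgrade ``a.e.\ $\la$'' to ``every $\la\neq 0$'', I would show that $\la\mapsto\pi_K^\la$ is continuous in the strong operator topology. The KN symbol $a_\la(\xi,\eta)=\widetilde{\widehat K}(\sgn(\la)|\la|^{1/2}\xi,|\la|^{1/2}\eta,\la)$ is smooth in $\la\neq 0$, with all $(\xi,\eta)$-derivatives bounded uniformly in $\la$ by the flag kernel estimates. For $u\in\ss(\Rn)$, integration by parts in $\xi$ gives the uniform pointwise decay $|\pi_K^\la u(\eta)|\lesssim(1+|\eta|)^{-N}$ for any $N$, so dominated convergence yields $\pi_K^{\la_n}u\to\pi_K^{\la_0}u$ in $L^2(\Rn)$ whenever $\la_n\to\la_0\neq 0$; uniform boundedness extends this to every $u\in L^2(\Rn)$. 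Choosing $\la_n\to\la_0$ inside $\RR\setminus N$ gives $\Vert\pi_K^{\la_0}u\Vert_2=\lim\Vert\pi_K^{\la_n}u\Vert_2\geqslant c\Vert u\Vert_2$ for every $\la_0\neq 0$. Self-adjointness of $\pi_K^{\la_0}$ (from $K=K^\star$) together with this two-sided lower bound forces invertibility with $\Vert(\pi_K^{\la_0})^{-1}\Vert\leqslant 1/c$, uniformly in $\la_0$.

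The main obstacle is the second step: Lemma~\ref{calki} only controls $\pi_K^\la$ on operators in the image of $f\mapsto\pi_f^\la$, and extracting a uniform-in-$u$ bound on $L^2(\Rn)$ requires constructing $f$'s that realize prescribed rank-one operators on a slice. The KN-symbol description of $\pi_f^\mu$ makes the construction explicit, but one must track measurability in $\la$, integrability of the constructed $f$, and the interaction between the exceptional sets from Lemma~\ref{calki} and the countable indexing used to glue the bounds into a single a.e.\ statement.
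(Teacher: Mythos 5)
Your proposal is correct and follows the paper's structure closely, but it is considerably more careful about two points that the paper passes over in silence, so a brief comparison is worth recording. Both proofs start from invertibility of $\op(K)$, pass through the Plancherel identity $\Vert f\Vert_2^2=\int_{\RR^\star}|\la|^n\Vert\pi_f^\la\Vert_{HS}^2\,d\la$, apply Lemma~\ref{calki} to deduce $\Vert\pi_K^\la\pi_f^\la\Vert_{HS}\geqslant C_K\Vert\pi_f^\la\Vert_{HS}$ for a.e.\ $\la$, and then feed rank-one operators into that inequality, using $K=K^\star$ to identify $(\pi_K^\la)^\star=\pi_K^\la$ and conclude a two-sided lower bound. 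Where you differ is in the two places the paper is informal. First, the paper writes $\Vert\pi_K^\la\mathcal{P}_{g,h}\Vert_{HS}\geqslant C_K\Vert\mathcal{P}_{g,h}\Vert_{HS}$ as though the Lemma~\ref{calki} inequality held for arbitrary Hilbert--Schmidt operators in place of $\pi_f^\la$; you instead exhibit an explicit $f_{u,v,g}\in L^2(\Hn)$, via the Kohn--Nirenberg symbol formula, whose $\la$-slice is the prescribed rank-one operator, which is exactly what justifies that step. Second, the paper's Lemma~\ref{calki} and Lemma~\ref{l2} only yield statements for almost every $\la$, while the theorem asserts invertibility of \emph{every} $\pi_K^\la$; you close this gap with a countable family of bumps $g$, a countable dense set of test vectors, and a strong-operator-continuity argument for $\la\mapsto\pi_K^\la$ based on the uniform symbol estimates and dominated convergence. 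Both of these additions are needed for a complete proof, and your continuity argument in particular is a genuine repair: nothing in the paper promotes the a.e.-$\la$ lower bound to all $\la\neq 0$, yet the subsequent Beals-theorem section requires it. The one small thing worth flagging in your write-up is the claim $|\pi_K^\la u(\eta)|\lesssim (1+|\eta|)^{-N}$ uniformly in $\la$; this is correct but deserves a line of justification (Schwartz class is preserved by $Sym^0$ pseudodifferential operators with continuous dependence of the Schwartz seminorms on the symbol seminorms), after which dominated convergence indeed gives $\pi_K^{\la_n}u\to\pi_K^{\la_0}u$ in $L^2(\Rn)$.
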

\begin{proof}
As $\op(K)$ is invertible there exists a constant $C_{K}$, such that for $f\in L^{2}(\Hn)$ $\Vert K\star f\Vert_{2}\geqslant C_{K}\Vert f\Vert_{2}$.
Using Plancherel formula we have
\begin{align*}
\int_{\RR^{\star}}|\la|^{n}\Vert\pi_{K\star f}^{\la}\Vert_{HS}^{2}d\la=\Vert K\star f\Vert_{2}^{2}\geqslant C_{K}^{2}\Vert f\Vert_{2}^{2}=C_{K}^{2}\int_{\RR^{\star}}|\la|^{n}\Vert\pi_{f}^{\la}\Vert_{HS}^{2}d\la.
\end{align*}
Now by Lemma \eqref{calki}
\begin{align*}
C_{K}\Vert\pi_{f}^{\la}\Vert_{HS}\leqslant\Vert\pi_{K\star f}^{\la}\Vert_{HS}=\Vert\pi_{K}^{\la}\pi_{f}^{\la}\Vert_{HS}\leqslant\Vert\pi_{K}^{\la}\Vert_{2\rightarrow 2}\Vert\pi_{f}^{\la}\Vert_{HS}.
\end{align*}
Consider the operator $\mathcal{P}_{g,h}$; $g,h\in L^{2}(\Rn)$, where $\Vert h\Vert_{2}\neq 0$, which acts on vectors 
$u\in L^{2}(\Rn)$ by $\mathcal{P}_{g,h}(u):=\langle u,g\rangle h$. It is easy to see that the kernel of $\mathcal{P}_{g,h}$ is $\Omega_{\mathcal{P}}(x,y)=g(x)h(y)$, so $\Vert\mathcal{P}_{g,h}\Vert_{HS}=\Vert g\Vert_{2}\Vert h\Vert_{2}$. Now
\begin{align*}
\pi_{K}^{\la}\mathcal{P}_{g,h}u=\langle \pi_{K}^{\la}u,g\rangle h=\langle u,(\pi_{K}^{\la})^{\star}g\rangle h=\langle u,\pi_{K^{\star}}^{\la}g\rangle h=\langle u,\pi_{K}^{\la}g\rangle h=\mathcal{P}_{\pi_{K}^{\la}g,h}u.
\end{align*} 
Hence
\begin{align*}
\Vert\pi_{K}^{\la}g\Vert_{2}\Vert h\Vert_{2}=\Vert\pi_{K}^{\la}\mathcal{P}_{g,h}\Vert_{HS}\geqslant C_{K}\Vert\mathcal{P}_{g,h}\Vert_{HS}=C_{K}\Vert g\Vert_{2}\Vert h\Vert_{2}.
\end{align*}
Dividing both sides by $\Vert h\Vert_{2}$ we obtain that
$\Vert \pi_{K}^{\la}g\Vert_{2}\geqslant C_{K}\Vert g\Vert_{2}$ holds for every $g\in\ L^{2}(\Rn)$ which, together with the fact that $\pi_{K}^{\la}$ is self-adjoint, implies our claim.
\end{proof}
 

\section{The Beals theorem and the main result}

Summing up our previous results we conclude that, for every $\la$, a flag kernel $K$, gives rise to an operator $\pi_{K}^{\la}$ which acts on $L^{2}(\Rn)$ as a pseudodifferential operator with the Kohn-Nirenberg symbol
$a_{\la}(w)=\widetilde{\widehat{K}}(|\la|^{1/2}w,\la)$. Moreover,
\begin{align*}
|\d_{w}^{\alpha}a_{\la}(w)|&=|\partial_{w}^{\alpha}\{\widetilde{\widehat{K}}(|\la|^{1/2}w,\la)\}|\leqslant c_{\alpha}(\Vert|\la|^{1/2}w\Vert+|\la|^{1/2})^{-|\alpha|}|\la|^{|\alpha|/2}\\
&=c_{\alpha}(1+\Vert w\Vert)^{-|\alpha|}.
\end{align*}
Observe that these estimates do not depend on $\la$. In particular, by the Calder{\'o}n-Vaillancourt theorem, the family $\{\pi_{K}^{\la}\}_{\la}$ is uniformly bounded on $L^{2}(\Rn)$.
Let us define
$$Sym^{0}(\RR^{2n}):=\{a\in\mathcal{C}^{\infty}(\RR^{2n}):|\partial_{w}^{\alpha}a(w)|\leqslant c_{\alpha}(1+\Vert w\Vert)^{-|\alpha|}\}.$$
In this language the family of symbols $\{a_{\la}\}_{\la}$ is bounded in $Sym^{0}(\RR^{2n})$ with the natural seminorm topology.
The key point in our argument is the following application of a much more general theorem of Beals.
\begin{theorem}[Beals \cite{beals}, Thm. 4.7]
Let $A=a(x,D)$, where $a\in Sym^{0}(\RR^{2n})$, be invertible on $L^{2}(\Rn)$. Then, $A^{-1}=b(x,D)$ with $b\in Sym^{0}(\RR^{2n})$. Each seminorm of $b$ depends only on a finite number of seminorms of $a$ and the operator norm of $A^{-1}$.
\end{theorem}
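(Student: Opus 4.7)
The plan is to invoke Beals' commutator characterization of pseudodifferential operators with symbols in $Sym^{0}(\RR^{2n})$. For an operator $T$ bounded on $L^{2}(\Rn)$, define the derivations
\[
\mathrm{ad}_{x_{j}}(T) = [T, x_{j}], \qquad \mathrm{ad}_{D_{j}}(T) = [T, D_{j}], \qquad D_{j} = \partial/\partial x_{j}.
\]
Beals' characterization asserts that $T = a(x,D)$ for some $a \in Sym^{0}(\RR^{2n})$ if and only if every iterated commutator $\mathrm{ad}_{x}^{\alpha}\mathrm{ad}_{D}^{\beta}(T)$ extends to a bounded operator on $L^{2}(\Rn)$, with operator norm controlled by finitely many seminorms of the symbol. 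In one direction this is a matter of symbolic calculus; in the other it requires reconstruction of the Kohn--Nirenberg symbol from the operator via oscillatory integrals.

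Applying the forward direction to our $A = a(x,D)$ with $a \in Sym^{0}$, every iterated commutator of $A$ with the coordinate multiplications and momentum operators is $L^{2}$-bounded, with norm majorized by a finite collection of seminorms of $a$. To transfer these bounds to $A^{-1}$, I would exploit the elementary algebraic identity
\[
[A^{-1}, B] = -A^{-1}[A, B]A^{-1},
\]
iterating it to express any $\mathrm{ad}_{x}^{\alpha}\mathrm{ad}_{D}^{\beta}(A^{-1})$ as a finite sum of alternating products built from copies of $A^{-1}$ and iterated commutators of $A$ of order at most $|\alpha|+|\beta|$. Since $\|A^{-1}\|_{L^{2} \to L^{2}}$ is finite by hypothesis and each commutator of $A$ is $L^{2}$-bounded, each commutator of $A^{-1}$ is $L^{2}$-bounded, with norm majorized by a polynomial in $\|A^{-1}\|$ and in finitely many seminorms of $a$.

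The converse direction of Beals' characterization then produces a symbol $b \in Sym^{0}(\RR^{2n})$ such that $A^{-1} = b(x,D)$, and the quantitative seminorm dependence asserted in the statement follows by tracking the constants through the commutator expansion: each seminorm of $b$ is controlled by a finite polynomial expression in $\|A^{-1}\|$ and in finitely many seminorms of $a$. The main obstacle is not the inversion step, which is purely algebraic, but rather the reverse direction of Beals' characterization itself; that is where the delicate analysis lies, and one would simply invoke it as a black box, referring to Beals' original paper for the reconstruction argument.
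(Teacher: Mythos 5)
The paper does not prove this statement: it is quoted verbatim from Beals \cite{beals} (Thm.~4.7) and invoked as a black box. There is therefore no ``paper's own proof'' to compare against. That said, your sketch is faithful to the mechanism by which such spectral-invariance results are actually established in Beals' work: the commutator characterization in the forward direction, the algebraic identity $[A^{-1},B]=-A^{-1}[A,B]A^{-1}$ to transfer commutator bounds from $A$ to $A^{-1}$, and the converse of the characterization to reconstruct a symbol for $A^{-1}$. So in spirit you are doing exactly what both the paper and Beals do.

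One caution worth flagging. As stated, the criterion ``all iterated commutators $\mathrm{ad}_{x}^{\alpha}\mathrm{ad}_{D}^{\beta}(T)$ are bounded on $L^{2}$'' is the Beals characterization of the class $S^{0}_{0,0}$ of symbols whose derivatives are merely bounded, whereas the paper's $Sym^{0}(\RR^{2n})$ is the isotropic class with \emph{decay} $|\d_{w}^{\alpha}a(w)|\lesssim(1+\Vert w\Vert)^{-|\alpha|}$. Applying the unweighted criterion only yields $b\in S^{0}_{0,0}$, which is strictly weaker than the conclusion $b\in Sym^{0}$ required here (and needed later in the paper to verify the flag-kernel estimates for $\widehat{B}$). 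To recover the decay one must use the weighted form of the characterization appropriate to the metric $g_{w}=|dw|^{2}/(1+\Vert w\Vert)^{2}$, in which each commutator with $x_{j}$ or $D_{j}$ is recorded as \emph{gaining} a factor of $(1+\Vert w\Vert)^{-1}$ (equivalently, one pairs the commutators with the isotropic weight $\bigl(1+|x|^{2}+|D|^{2}\bigr)^{1/2}$ or tracks decay through the relation $\d_{w}b=-b\#(\d_{w}a)\#b$). Beals' general weighted framework, which is precisely what the paper cites, is formulated to handle this; your sketch is fine provided the ``black box'' you invoke is that refined version rather than the plain $S^{0}_{0,0}$ characterization.
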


Let us denote the symbol of $(\pi_{K}^{\la})^{-1}$ by $b_{\la}$.
One can conclude that the seminorms of $b_{\la}$ once again do not depend on $\la$. So the family $\{b_{\la}\}_{\la}$ also corresponds to a bounded family in $Sym^{0}(\RR^{2n})$. Following G{\l}owacki \cite{arkiv2007}, we say that $a$ is a weak limit of a bounded sequence $\{a_{n}\}_{n}$ in $Sym^{0}(\RR^{2n})$ iff for every $\alpha$ the sequence $\{\d^{\alpha}a_{n}\}_{n}$ converges almost uniformly to $\d^{\alpha}a$. The twisted multiplication $\#$ is continuous in the weak sense. In the language of symbols the equation $\pi_{K}^{\la}(\pi_{K}^{\la})^{-1}=Id$ corresponds to $a_{\la}\#b_{\la}=1$. 
\begin{lemma}
The family $\{b_{\la}\}_{\la}$ is weakly smooth in the parameter $\la$. 
\end{lemma}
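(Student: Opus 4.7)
The plan is to exploit the identities $a_{\la}\#b_{\la}=1=b_{\la}\#a_{\la}$ in order to express the $\la$-derivatives of $b_{\la}$ as iterated twisted products of $b_{\la}$'s and $\la$-derivatives of $a_{\la}$, all of which will live uniformly in $Sym^{0}(\RR^{2n})$. First I would verify that the family $\{a_{\la}\}_{\la}$ is itself weakly smooth in $\la$. Since $\widehat{K}$ is smooth off $\{\la=0\}$ and satisfies the flag-kernel estimates $|\d_{w}^{\alpha}\d_{\la}^{\beta}\widehat{K}(w,\la)|\lesssim(\Vert w\Vert+|\la|^{1/2})^{-|\alpha|}|\la|^{-\beta}$, the chain rule applied to $a_{\la}(w)=\widetilde{\widehat{K}}(|\la|^{1/2}w,\la)$ shows that for every $k$ the family $\{\d_{\la}^{k}a_{\la}\}$ is, on compact subsets of $\RR\setminus\{0\}$, bounded in $Sym^{0}(\RR^{2n})$ and depends weakly continuously on $\la$. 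A Taylor expansion then upgrades this to weak $\mathcal{C}^{\infty}$ dependence.

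Next, subtracting the two identities $a_{\la}\#b_{\la}=1$ and $a_{\mu}\#b_{\mu}=1$ one obtains $a_{\la}\#(b_{\la}-b_{\mu})=(a_{\mu}-a_{\la})\#b_{\mu}$, which after left-composition with $b_{\la}$ yields the key formula
\[
b_{\la}-b_{\mu}=b_{\la}\#(a_{\mu}-a_{\la})\#b_{\mu}.
\]
Using the weak continuity of the twisted product $\#$ in $Sym^{0}(\RR^{2n})$ established by G{\l}owacki \cite{arkiv2007}, the uniform boundedness of $\{b_{\la}\}$ in $Sym^{0}(\RR^{2n})$ which follows from Beals' theorem together with the uniform invertibility of the $\pi_{K}^{\la}$ proved in the previous section, and the weak continuity of $\la\mapsto a_{\la}$, we conclude that $\la\mapsto b_{\la}$ is weakly continuous.

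For differentiability, I would divide the displayed identity by $\mu-\la$ and let $\mu\to\la$. The difference quotient $(a_{\mu}-a_{\la})/(\mu-\la)$ converges weakly to $\d_{\la}a_{\la}$, while both $b_{\la}$ and $b_{\mu}$ sit in a bounded set of $Sym^{0}(\RR^{2n})$ with $b_{\mu}\to b_{\la}$ weakly. The weak continuity of $\#$ then gives
\[
\d_{\la}b_{\la}=-b_{\la}\#(\d_{\la}a_{\la})\#b_{\la},
\]
which is visibly an element of $Sym^{0}(\RR^{2n})$ with seminorms bounded uniformly on compact $\la$-intervals. Higher derivatives follow by induction: formal differentiation of the formula above produces a finite sum of iterated $\#$-products of $b_{\la}$'s and $\d_{\la}^{j}a_{\la}$'s with $j\leqslant k$, and each successive step is justified by precisely the same difference-quotient argument.

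The main obstacle I expect is the careful control of the weak-continuity properties of $\#$: one needs to know not merely that $\#$ sends bounded pairs to bounded families, but that weak convergence (almost uniform convergence of all derivatives) is preserved under the twisted product. This is exactly the content of the symbolic calculus in G{\l}owacki \cite{arkiv2007} referenced earlier, and it is what makes the difference-quotient limit argument rigorous rather than merely formal.
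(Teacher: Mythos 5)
Your proposal follows essentially the same strategy as the paper's proof: establish uniform boundedness of $\{b_{\la}\}_{\la}$ in $Sym^{0}(\RR^{2n})$ via Beals' theorem and uniform invertibility, exploit the algebraic identity coming from $a_{\la}\#b_{\la}=1$ to express $\d_{\la}b_{\la}=-b_{\la}\#\d_{\la}a_{\la}\#b_{\la}$, and iterate inductively while invoking the weak continuity of $\#$ from G{\l}owacki's calculus. The one small divergence is the base case of continuity: the paper applies an Arzel\`a--Ascoli subsequence argument (every weak subsequential limit of $b_{\la_{n}}$ is forced by the identity to equal $b_{\la}$), whereas you read continuity directly off the displayed identity $b_{\la}-b_{\mu}=b_{\la}\#(a_{\mu}-a_{\la})\#b_{\mu}$. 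Your version is marginally more uniform, since the same identity drives both the base case and the inductive step; both variants rest on exactly the same ingredients, so this is a stylistic rather than a substantive difference.
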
 
\begin{proof}
We proceed by induction. Let $\{\la_{n}\}_{n}$ converge to a nonzero $\la$. As $\{b_{\la_{n}}\}_{n}$ is bounded in $Sym^{0}(\RR^{2n})$, we can use Arzeli-Ascoli theorem to find a weakly convergent subsequence. Let $\{b_{\la_{n_{k}}}\}_{k}$ tend to $b_{\la}(\{n_{k}\})$. We have $$1=b_{\la_{n_{k}}}\#a_{\la_{n_{k}}}\rightarrow b_{\la(\{n_{k}\})}\#a_{\la}.$$ Hence for every convergent subsequence $\{b_{\la_{n_{k}}}\}_{k}$, the limit must be the same and equal to $b_{\la}$. Therefore, it also must be the limit of $\{b_{\la_{n}}\}_{n}$.
Assume now that $\d_{\la}^{N}b_{\la}$ is continuous for $N<M$.
Observe that using continuity of $b_{\la}$ which we have just obtained, formally we have
\begin{align}\label{rozklad}
\lim_{h\rightarrow 0}\frac{b_{\la+h}-b_{\la}}{h}=\lim_{h\rightarrow 0}b_{\la}\#\frac{a_{\la}-a_{\la+h}}{h}\#b_{\la+h}=-b_{\la}\#\d_{\la}a_{\la}\#b_{\la},
\end{align}
where the right hand side is weakly continuous. Consider the set $$\Xi:=\{M=(M_{1},M_{2},M_{3}):M_{2}>0,M_{1}+M_{2}+M_{3}=M\}.$$ Iterating the decomposition (\ref{rozklad}) we obtain 
\begin{align}\label{pelny}
\d_{\la}^{M}b_{\la}=\sum_{M\in\Xi}c_{M}\d_{\la}^{M_{1}}b_{\la}\#\d_{\la}^{M_{2}}a_{\la}\#\d_{\la}^{M_{3}}b_{\la}.
\end{align}
By induction hypothesis the right hand side is again weakly continuous. Hence the proof is complete.
\end{proof}
The decomposition (\ref{pelny}) actually gives more. It turns out that $b(w,\la):=b_{\la}(w)$ is also smooth if only $\la\neq 0$. It is a consequence of the fact that every derivative of $b(w,\la)$ has bounded partial derivatives outside of every set of type $\RR^{2n}\times [-\varepsilon,\varepsilon]$.

Note that $\widetilde{\widehat{B}}(w,\la)=\widehat{B}(-w,-\la)$, so $\widetilde{\widehat{B}}$ is the Fourier transform of a flag kernel if and only if $\widehat{B}$ is.

\begin{theorem}
Let $B$ be a distribution such that $\widehat{B}(w,\la)=b_{\la}(|\la|^{-1/2}w)$. Then, $B$ is a flag kernel.
\end{theorem}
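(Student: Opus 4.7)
To prove that $B$ is a flag kernel I must verify, in view of Definition 3.1 and the Nagel--Ricci--Stein bijection between flag kernels and their multipliers, that $\widehat B$ is smooth off the hyperplane $\{\la=0\}$ and satisfies
$$|\d_w^{\alpha}\d_\la^{\beta}\widehat B(w,\la)|\leqslant c_{\alpha,\beta}(\Vert w\Vert+|\la|^{1/2})^{-|\alpha|}|\la|^{-\beta},\qquad\la\neq 0.$$
Joint smoothness of $\widehat B$ off $\la=0$ is immediate from the joint smoothness of $(w,\la)\mapsto b_\la(w)$ on $\RR^{2n}\times\RR^\star$ noted after the decomposition (\ref{pelny}), composed with the smooth map $(w,\la)\mapsto(|\la|^{-1/2}w,\la)$.

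The quantitative heart of the argument is the weighted symbol bound
$$(\ast)\qquad|\d_w^{\alpha}\d_\la^{\beta}b_\la(w)|\leqslant c_{\alpha,\beta}|\la|^{-\beta}(1+\Vert w\Vert)^{-|\alpha|},\qquad\la\neq 0,$$
which refines the fact that $\{b_\la\}_\la$ is bounded in $Sym^0(\RR^{2n})$. The base case $\beta=0$ is the Beals estimate already obtained. For $\beta\geqslant 1$ I would argue by induction using the iterated decomposition (\ref{pelny}) together with the continuity of the $\#$-product on bounded subsets of $Sym^0$ coming from G{\l}owacki's symbolic calculus; this reduces the claim to the analogous weighted estimate for $a_\la(w)=\widetilde{\widehat K}(|\la|^{1/2}w,\la)$. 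That in turn follows by direct differentiation from the flag kernel estimates on $\widehat K$: a $\d_\la$ either lands on the second slot of $\widehat K$, giving $|\la|^{-1}$ directly, or produces a chain-rule factor proportional to $|\la|^{-1/2}w$ paired with a $\d_1\widehat K$ bounded by $c(|\la|^{1/2}+|\la|^{1/2}\Vert w\Vert)^{-1}$, whose product is again $\lesssim|\la|^{-1}$ uniformly in $w$.

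With $(\ast)$ available, I verify the multiplier estimate for $\widehat B(w,\la)=b_\la(|\la|^{-1/2}w)$ by an iterated chain rule. A $w$-derivative brings out $|\la|^{-1/2}$ together with a $\d_u b_\la$ evaluated at $u=|\la|^{-1/2}w$; by $(\ast)$ this is controlled by $|\la|^{-1/2}(1+|\la|^{-1/2}\Vert w\Vert)^{-1}=(|\la|^{1/2}+\Vert w\Vert)^{-1}$, giving the correct weight per $w$-derivative. A $\la$-derivative either hits the $\la$-subscript, in which case $(\ast)$ with $\beta\geqslant 1$ supplies the weight $|\la|^{-1}$, or hits the argument via $\d_\la(|\la|^{-1/2}w)=\pm w/(2|\la|^{3/2})$ and couples with a $\d_u b_\la$, producing
$$\frac{\Vert w\Vert}{|\la|^{3/2}}\cdot\frac{|\la|^{1/2}}{|\la|^{1/2}+\Vert w\Vert}=\frac{\Vert w\Vert}{|\la|(|\la|^{1/2}+\Vert w\Vert)}\leqslant\frac{1}{|\la|}.$$
Either way each $\la$-derivative carries a factor $|\la|^{-1}$, and combining with the $w$-derivatives yields the flag kernel multiplier estimate.

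The main technical obstacle is the bookkeeping in step two: the iterated decomposition (\ref{pelny}) must cooperate with a $\la$-weighted refinement of G{\l}owacki's $\#$-calculus so that the $|\la|^{-\beta}$ weights propagate through each $\#$-product without degrading the uniform $Sym^0$ character in $w$. Once $(\ast)$ is established, the remaining chain-rule computation is purely combinatorial, and the Nagel--Ricci--Stein correspondence then promotes $\widehat B$ to a flag kernel $B$, finishing the proof of Theorem \ref{nasze}.
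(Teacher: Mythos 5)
Your argument is correct and its core is the same as the paper's: both rely on the iterated decomposition (\ref{pelny}) together with G{\l}owacki's symbolic calculus to prove the weighted symbol bound $(\ast)$, equivalently that $\{|\la|^M\d_\la^M b_\la\}_\la$ is bounded in $Sym^0(\RR^{2n})$ for every $M$, with the base case supplied by Beals and the inductive step reduced to the direct estimate $|\d_\la^{M_2}a_\la|\lesssim|\la|^{-M_2}$ exactly as you describe. The one genuine (though small) organizational difference is in how $(\ast)$ is converted into the multiplier estimate for $\widehat B$. The paper expands $\d_w^\alpha\d_\la^M b_\la(w)$ via $b_\la(w)=\widehat B(|\la|^{1/2}w,\la)$ into a sum of $\widehat B$-derivatives, isolates the one term with the full $\la$-order $M$ (the quantity to be bounded), and controls the remaining terms using the flag-kernel estimate for $\widehat B$ at strictly lower $\la$-order; this requires a second, nested induction on the $\widehat B$ estimates with the $w$-derivative bound (\ref{indukcja}) as base case. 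You instead expand in the opposite direction, $\widehat B(w,\la)=b_\la(|\la|^{-1/2}w)$, expressing $\d_w^\alpha\d_\la^\beta\widehat B$ as a sum of $b_\la$-derivatives and bounding every term at once from $(\ast)$; the positive powers of $\|w\|$ produced by the chain rule are absorbed by the elementary inequality $\|w\|^{|\kappa|}(|\la|^{1/2}+\|w\|)^{-|\kappa|}\leqslant 1$. Both routes are sound; yours avoids the second nested induction and makes the heuristic ``each $\la$-derivative costs $|\la|^{-1}$, each $w$-derivative costs $(|\la|^{1/2}+\|w\|)^{-1}$'' more transparent, at the price of a somewhat heavier Fa\`a di Bruno bookkeeping. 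You are also right that the ``main technical obstacle'' is exactly as you flag it: the $\#$-products in (\ref{pelny}) must be seen to map bounded families in $Sym^0$ to bounded families uniformly in $\la$, which is what G{\l}owacki's calculus with the metric $g_{(w,\la)}$ provides and what the paper also invokes at the corresponding point.
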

\begin{proof}
It is obvious from the definition that $\widehat{B}$ is smooth away from the hyperspace $\{(w,\la):\la=0\}$. We have
\begin{align*}
|(\d_{1,2,...,2n}^{\alpha}\widehat{B})(|\la|^{1/2}w,\la)||\la|^{|\alpha|/2}&=|\d_{1,2,...,2n}^{\alpha}\{\widehat{B}(|\la|^{1/2}w,\la)\}|=|\d_{w}^{\alpha}b_{\la}(w)|\\
\leqslant c_{\alpha}(1+\Vert w\Vert)^{-|\alpha|}.
\end{align*}
Therefore,
\begin{align*}
|(\d_{1,2,...,2n}^{\alpha}\widehat{B})(|\la|^{1/2}w,\la)|\leqslant c_{\alpha}(1+\Vert w\Vert)^{-|\alpha|}|\la|^{-|\alpha|/2}=c_{\alpha}(|\la|^{1/2}+\Vert|\la|^{1/2}w\Vert)^{-|\alpha|}.
\end{align*}
Now putting $w$ instead of $|\la|^{1/2}w$ we obtain
\begin{align}\label{indukcja}
|(\d_{w}^{\alpha}\widehat{B})(w,\la)|\leqslant c_{\alpha}(|\la|^{1/2}+\Vert w\Vert)^{-|\alpha|}.
\end{align}
It sufficies now to get the estimates of the derivatives with respect to $\la$. We can treat inequality (\ref{indukcja}) as an initial step of an induction.
First of all using the fact that $K$ is a flag kernel one can calculate that
\begin{align*}
|\d_{\la}^{M}a_{\la}(w)|&=|\d_{\la}^{M}\{\widetilde{\widehat{K}}(|\la|^{1/2}w,\la)\}|\\
&=\mid\sum_{1\leqslant |\beta|+j\leqslant M}\frac{c_{\beta,j}w^{\beta}}{|\la|^{M-j-|\beta|/2}}(\d_{1,2,...,2n}^{\beta}\d^{j}_{2n+1}\widetilde{\widehat{K})}(|\la|^{1/2}w,\la)\mid\\
&\leqslant\sum_{1\leqslant |\beta|+j\leqslant M}\frac{|c_{\beta,j}|\Vert w\Vert^{|\beta|}}{|\la|^{M-j-|\beta|/2}}(\Vert |\la|^{1/2}w\Vert+|\la|^{1/2})^{-|\beta|}|\la|^{-j}\\
&\lesssim\frac{\Vert w\Vert^{|\beta|}}{|\la|^{M}(1+\Vert w\Vert)^{|\beta|}}\leqslant|\la|^{-M}.
\end{align*}
As $\{b_{\la}\}_{\la}$ is a bounded family in $Sym^{0}(\RR^{2n})$ let us assume that it is so for the families $\{|\la|^{N}\d_{\la}^{N}b_{\la}\}_{\la}$, where $N<M$.
Now using (\ref{pelny}) we can write
$$|\la|^{M}\d_{\la}^{M}b_{\la}=\sum_{M\in\Xi}c_{M}|\la|^{M_{1}}\d_{\la}^{M_{1}}b_{\la}\#|\la|^{M_{2}}\d_{\la}^{M_{2}}a_{\la}\#|\la|^{M_{3}}\d_{\la}^{M_{3}}b_{\la}.$$
As $M_{2}>0$ one can use an induction argument and the standard symbolic calculus to deduce that the right hand side is bounded in $Sym^{0}(\RR^{2n})$. Therefore, the families $\{|\la|^{M}\d_{\la}^{M}b_{\la}\}_{\la}$ are bounded, for all $M\in\mathbb{N}$.
Thus, $$|\d_{w}^{\alpha}(|\la|^{M}\d_{\la}^{\beta}b_{\la}(w))|\leqslant c_{\alpha, M}(1+\Vert w\Vert)^{-|\alpha|},$$
so
$$|\d_{w}^{\alpha}\d_{\la}^{\beta}b_{\la}(w)|\leqslant c_{\alpha, M}(1+\Vert w\Vert)^{-|\alpha|}|\la|^{-M}.$$
One can calculate that
$$\d_{w}^{\alpha}\d_{\la}^{M}b_{\la}(w)=\sum_{\gamma+\delta=\alpha}\sum_{1\leqslant |\beta|+j\leqslant M}\frac{c_{\gamma,\beta,j}w^{\beta-\gamma}}{|\la|^{M-j-(|\beta|+|\delta|)/2}}(\d_{1,2,...,2n}^{\beta+\delta}\d^{j}_{2n+1}\widehat{B})(|\la|^{1/2}w,\la).$$
The only component of the sum on the right hand side that includes $j=M$ is the one with $\beta=\gamma=0,\ \delta=\alpha$. So, by induction hypothesis, we have
\begin{align*}
&|(\d_{1,2,...,2n}^{\alpha}\d^{M}_{2n+1}\widehat{B})(|\la|^{1/2}w,\la)||\la|^{|\alpha|/2}\\
&\lesssim\sum_{\gamma+\delta=\alpha}\sum_{1\leqslant |\beta|+j\leqslant M,\ \beta\neq 0}\frac{\Vert w\Vert^{|\beta|-|\gamma|}}{|\la|^{M-j-(|\beta|+|\delta|)/2}(\Vert |\la|^{1/2}w\Vert+|\la|^{1/2})^{|\beta|+|\delta|}|\la|^{j}}\\
&+|\d_{w}^{\alpha}\d_{\la}^{M}b_{\la}(w)|\\
&\lesssim\sum_{\gamma+\delta=\alpha}\sum_{1\leqslant |\beta|+j\leqslant M,\ \beta\neq 0}\frac{\Vert w\Vert^{|\beta|-|\gamma|}}{(1+\Vert w\Vert)^{|\beta|-|\gamma|}(1+\Vert w\Vert)^{|\alpha|}|\la|^{M}}+(1+\Vert w\Vert)^{-|\alpha|}|\la|^{-M}\\
&\lesssim (1+\Vert w\Vert)^{-|\alpha|}|\la|^{-M}.
\end{align*}
Thus,
\begin{align*}
|(\d_{1,2,...,2n}^{\alpha}\d^{M}_{2n+1}\widehat{B})(|\la|^{1/2}w,\la)|&\leqslant c_{\alpha,M}(1+\Vert w\Vert)^{-|\alpha|}|\la|^{-M}|\la|^{-|\alpha|/2}\\
&= c_{\alpha,M}(\Vert |\la|^{1/2}w\Vert+|\la|^{1/2})^{-|\alpha|}|\la|^{-M}.
\end{align*}
Again putting $w$ instead of $|\la|^{1/2}w$ we obtain
$$|(\d_{w}^{\alpha}\d^{M}_{\la}\widehat{B})(w,\la)|\leqslant c_{\alpha,M}(\Vert w\Vert+|\la|^{1/2})^{-|\alpha|}|\la|^{-M}.$$
\end{proof}
\begin{proof}[proof of Theorem \ref{nasze}]
The KN symbol of $(\pi_{K}^{\la})^{-1}$ is $b_{\la}(w)=\widehat{B}(|\la|^{1/2}w,\la)$ which is the symbol of $\pi_{\widetilde{B}}^{\la}$ and $\widetilde{B}$ is a flag kernel. So $(\pi_{K}^{\la})^{-1}=\pi_{\widetilde{B}}^{\la}$. Now
$$\pi_{\delta_{0}}^{\la}=Id=(\pi_{K}^{\la})^{-1}\pi_{K}^{\la}=\pi_{K}^{\la}(\pi_{K}^{\la})^{-1}=\pi_{\widetilde{B}}^{\la}\pi_{K}^{\la}=\pi_{K}^{\la}\pi_{\widetilde{B}}^{\la}=\pi_{\widetilde{B}\star K}^{\la}=\pi_{K\star \widetilde{B}}^{\la}.$$
So $K\star \widetilde{B}=\delta_{0}=\widetilde{B}\star K$. Now putting $L=\widetilde{B}$ for $f\in L^{2}(\Hn)$ we achieve $K\star L\star f=L\star K\star f=f$, which is equivalent to $\op(K)\op(L)f=\op(L)\op(K)f=f$ and finally $\op(K)^{-1}=\op(L)$.
\end{proof}


\section*{Acknowledgements}

The author wishes to express his deep gratitude to P.G{\l}owacki and M.Preisner for their helpful advices in preparing the manuscript.


\end{document}